


\documentclass[a4paper, 12pt]{amsart}
\usepackage[T1]{fontenc}
\usepackage{amssymb,amsthm,amsmath}
\usepackage{xspace}
\usepackage{enumerate}


\newcommand{\R}{\mathbb{R}}

\newcommand{\cF}{\mathcal{F}}

\newcommand{\cN}{\mathcal{N}}


\newtheorem{theorem}{Theorem}[section]
\newtheorem{lemma}[theorem]{Lemma}
\newtheorem{proposition}[theorem]{Proposition}

\theoremstyle{remark}
\newtheorem{remark}[theorem]{Remark}

\newtheorem{definition}[theorem]{Definition}




\begin{document}

\title[]
{Stability of travelling waves in stochastic Nagumo equations}
\author{Wilhelm Stannat}
\address{%
Institut f\"ur Mathematik\\
Technische Universit\"at Berlin \\
Stra{\ss}e des 17. Juni 136\\
D-10623 Berlin\\
and\\
Bernstein Center for Computational Neuroscience\\
Philippstr. 13\\
D-10115 Berlin\\
Germany}
\email{stannat@math.tu-berlin.de}
\date{Berlin, December 12, 2013}

\begin{abstract}
Stability of travelling waves for the Nagumo equation on the whole line is proven using a new approach via functional inequalities 
and an implicitely defined phase adaption. The approach can be carried over to obtain the stability of travelling wave solutions 
in the case of the stochastic Nagumo equation as well. The noise term considered is of multiplicative type with trace-class covariance. 
\end{abstract}

\keywords{stochastic Nagumo equation, travelling wave, metastability, functional inequalities, ground state}
\subjclass{60H15, 35R60, 35B35, 35K55 92A09}

\maketitle 

\section{Introduction} 

\noindent 
The purpose of this paper is to introduce a new approach to the study of (local) stability of travelling waves and pulses in 
excitable media that is in particular well-suited for stochastic perturbations. We are interested in the classical equations 
modelling the propagation of the action potential travelling along the axon of a neuron. As a starting point in this paper 
we consider the Nagumo equation on the real line (cf. \cite{Nag}) perturbed by stochastic forcing terms. We make particular 
use of the explicit knowledge of the travelling waves in this case. However, our approach will be robust w.r.t. small 
perturbations in the coefficients.   

\smallskip 
\noindent 
Since the spectral considerations, employed in the classical stability analysis of nerve axon equations (cf. \cite{Ev, Henry, Jones} 
and the recent monograph \cite{ET}) are not easy to carry over to the stochastic case, we look for a pathwise stability analysis 
in the sense of the classical Lyapunov approach to the stability of dynamical systems. A first novelty of the paper is the introduction 
of an additional dynamics of gradient type that adapts a given solution of the stochastic Nagumo equation to the correct phase of the 
travelling wave. This explicitely given phase adaption, which is in addition easy to implement numerically, is the analogue of the 
phase conditions introduced as algebraic constraints in the classical stability analysis (see in particular \cite{Henry}). As a second 
novelty in this paper, we replace the usual spectral considerations, applied to the Schr\"odinger operator, obtained as linearization 
of the underlying dynamics along a given travelling wave, by functional inequalities of Poincare type. Our hope is that the latter method 
will be generalizable also to general systems of reaction diffusion type because it only uses partial information of the travelling wave 
solutions. Certainly, it is well suited for stochastic perturbations as demonstrated in this paper. An additional advantage is that, in 
contrast to the usual spectral considerations, our approach allows explicit quantitative estimates, both, in the deterministic and in 
the stochastic case and sensitivity considerations w.r.t. the coefficients. 

\smallskip 
\noindent 
The paper is organized as follows: In Section \ref{sec1} we first present our new approach in the case of the deterministic Nagumo 
equation, to demonstrate the main arguments in a somewhat easier setting. The analogue to the usual spectral considerations of the 
Schr\"odinger operator associated with the linearization along a travelling wave is contained in Theorem \ref{thm0}. Our result obtained 
on the spectral gap is optimal (see Proposition \ref{PropPoincare}). Theorem \ref{th1} then contains our main result on the local stability of 
travelling wave solutions. In Section \ref{sec2} we consider the Nagumo equation perturbed with multiplicative noise. Combining our 
stability analysis of Section \ref{sec1} with a careful analysis of the stochastic perturbation, we obtain in Theorem \ref{th2} the 
stochastic analogue of our local stability result in the deterministic case. Our identification of the implicitely defined phase allows to 
rigorously set up a stochastic differential equation for the speed of the wave front and thus gives rise to the correct decomposition 
of the stochastic dynamics into the travelling wave and random fluctuations. Work is in progress to generalize the approach to the stochastic 
neural fields equations considered in \cite{BW}.

\smallskip 
\noindent 
In addition to the new approach to the stability analysis via functional inequalities we also would like to mention that the type of 
stochastic Nagumo equations considered in this paper are also new in comparison with the models of spatially extended neurons subject 
to noise studied numerically and analytically by Tuckwell and Jost in \cite{T2010, T2011} and also by Lord and Th\"ummler in 
\cite{LT}. In order to ensure existence and uniqueness of a solution to our stochastic partial differential equation we use 
the variational approach to stochastic evolution equations as presented in monograph \cite{PR} with recent extensions presented in 
\cite{LR}. In particular, we make use of the Ito formula, that can be obtained for the Hilbert space norm of the variational solution. 
The implied semimartingale decomposition can then be used to apply the one-dimensional (time-dependent) Ito formula to any smooth 
transformation of the Hilbert norm.

\section{The deterministic case} 
\label{sec1}

\medskip 
\noindent 
Consider the Nagumo equation 
\begin{equation} 
\label{Nag} 
\begin{aligned} 
\partial_t v  (t,x) & = \nu\partial_{xx}^2 v (t,x) + bf(v(t,x)) \quad
(t,x) \in \Bbb R_+ \times \Bbb R 
\end{aligned} 
\end{equation} 
on the real line, with $\nu$, $b >0$ and   
$$ 
f(v) = v(1-v)(v-a)  \quad a \in \left( 0, 1\right)\, . 
$$ 
The equation is obtained from the well-known Fitz-Hugh Nagumo system 
\begin{equation} 
\label{FHN} 
\begin{aligned} 
\partial_t v (t,x) & = \nu\partial_{xx}^2 v (t,x) + bf(v(t,x)) - w(t,x) + I \\
\partial_t w (t,x) & = \varepsilon\left( v(t,x) - \gamma w(t,x)\right)  \quad (t,x) \in \Bbb R_+ \times \Bbb R 
\end{aligned} 
\end{equation} 
by letting $\varepsilon\downarrow 0$, i.e., setting the recovery variable $w$ constant, and further equal to the 
input current $I$. It is well-known that for parameters in the exitable region, the Fitz-Hugh Nagumo system admits a travelling pulse solution modelling 
signal propagation along the axon of a single neuron. The analogue for the Nagumo equation is a travelling wave front $v (t,x) = v^{TW} (x + ct)$, where 
\begin{equation} 
\label{TWsolution} 
v^{TW} (x) = \left( 1+\exp\left( -\sqrt{\frac b{2\nu}} x\right)\right)^{-1} 
\end{equation} 
moving to $- \infty$ at constant speed $c = \sqrt{2\nu b}\left( \frac 12 - a\right)$ (cf. \cite{CG}). We are interested in the local stability of this wave front  
in the function space $H = L^2 (\Bbb R)$.

\medskip 
\noindent 
Before we can state a precise definition of stability, we need to introduce first our concept of a solution that we are working with. 
To simplify notations in the following we write $v^{TW} (t) = v^{TW} (\cdot + ct)$. Next (formally) decompose the function $v (t, \cdot )  
= u(t, \cdot ) + v^{TW} (t)$ w.r.t. the travelling wave. The resulting equation for $u$ is then given by 
\begin{equation} 
\label{RelNag} 
\begin{aligned} 
\partial_t u (t,x) & = \nu\partial_{xx}^2 u (t,x) + b\left( f(u(t,x) + v^{TW} (t)) - f(v^{TW} (t))\right) \\
(t,x) \in \Bbb R_+ \times \Bbb R \, . 
\end{aligned} 
\end{equation} 

\medskip 
\noindent 
For the precise definition of the Laplacian $\partial^2_{xx}$ we need to introduce the Sobolev space $V = H^{1,2} (\R)$ of order $1$, 
equipped with the usual norm $\|u\|_V^2 := \int\left( \partial_x u\right)^2\, dx + \|u\|_H^2$. Clearly, $V\hookrightarrow H$ 
densely and continuously. Identifying $H$ with its dual $H^\prime$ we obtain the embeddings $V\hookrightarrow H\equiv H^\prime 
\hookrightarrow V^\prime$. Recall that w.r.t. this embedding the dualization $_{V^\prime}\langle f, u\rangle_V$ between $f\in V^\prime$ and $u\in V$ 
reduces to $_{V^\prime}\langle f, u\rangle_V = \langle f, v\rangle_H = \int fu\, dx$, i.e. the scalar product in $H$ in the case where $f\in H$. 
The Laplacian $\partial^2_{xx}$ then induces a linear continuous mapping $\Delta : V\rightarrow V^\prime$, since $_{V^\prime}\langle \Delta u, v\rangle_V 
= - \int \partial_x u \partial_x v\, dx \le \|u\|_V \|v\|_V$. 

\medskip 
\noindent 
The nonlinear term 
\begin{equation} 
\label{G}
G(t,u) := f(u(t,x) + v^{TW} (t)) - f(v^{TW} (t)) 
\end{equation}  
in equation \eqref{RelNag} can be realized as a continuous mapping 
$$ 
G (\cdot, \cdot ): [0, \infty ] \times V \to V^\prime 
$$ 
that is Lipschitz w.r.t. the second variable on bounded subsets of $V$ with Lipschitz constant independent of $t$. Indeed, due to the elementary 
estimate $\|u\|_\infty \le \|u\|_V$, the Taylor representation 
$$ 
\begin{aligned} 
G(t, u) & = f(u + v^{TW} (t) - f(v^{TW} (t)) \\ 
& = f^\prime (v^{TW} ) (t) u +  \frac 12 f^{(2)} (v^{TW}(t)) u^2 +  \frac 16 f^{(3)} (v^{TW}(t)) u^3
\end{aligned} 
$$ 
and uniform bounds on $\|f^{(k)} (v^{TW} (t))\|_\infty$, $k=1, 2,3$, we have for $w\in V$ that 
$$
\begin{aligned} 
\langle G(t, u), w\rangle & \le \int |f(u+ v^{TW} (t))-f(v^{TW} (t))||w|\, dx \\ 
& \le c_1 \|u\|_V \left( 1 + \|u\|_H^2\right)\|w\|_V\, , 
\end{aligned} 
$$ 
hence 
\begin{equation} 
\label{Boundedness} 
\|G(t, u)\|_{V^\prime} \le c_1 \|u\|_V \left( 1 + \|u\|_H^2\right) 
\end{equation} 
and 
$$ 
\begin{aligned} 
\langle G(t, u_1) - G(t, u_2), w\rangle & \le \int |f(u_1+ v^{TW} (t))- f(u_2+ v^{TW} (t)) | |w|\, dx  \\ 
& \le c_2 \left( 1+ \|u_1\|^2_V + \|u_2\|^2_V\right) \|u_1 - u_2\|_H \|w\|_V
\end{aligned} 
$$ 
so that  
\begin{equation} 
\label{Lipschitz} 
\| G(t, u_1) - G(t, u_2)\|_{V^\prime} \le c_2 \left( 1+ \|u_1\|^2_V + \|u_2\|^2_V\right) \|u_1 - u_2\|_H
\end{equation} 
for finite constants $c_1$ and $c_2$ depending on $f_{|[0,1]}$ only.

\medskip 
\noindent 
Note also that the sum $\nu\Delta u + bG(t,u)$ satisfies the (global) monotonicity condition 
\begin{equation} 
\label{Monotonicity} 
\langle \nu\Delta u_1 +  bG(t, u_1) - \nu\Delta u_2 -  bG(t, u_2),u_1 - u_2\rangle  \le b\eta \|u_1 - u_2\|_H^2
\end{equation} 
where $\eta = \sup_{\xi\in\Bbb R} f^\prime (\xi ) = \frac {1-a+a^2}3$, since $(f(s) - f(t))(s-t) \le \eta (s-t)^2$ for all 
$s,t\in\Bbb R$, and the coercivity condition 
\begin{equation} 
\label{Coercivity} 
\langle \nu\Delta u + bG(t, u),u\rangle  \le - \nu \|u\|_V^2 + (b\eta + \nu )\|u\|_H^2 
\end{equation} 
since $f(s)s = (f(s) - f(0))(s-0) \le \eta s^2$ for all $s\in\Bbb R$.

\medskip 
\noindent 
It is now standard (see, e.g. Theorem 1.1 in \cite{LR}) to deduce for all $u_0\in H$ and all finite times T existence and uniqueness of a 
variational solution $u\in L^\infty ([0,T]\, ; H)\cap L^2 ([0,T]\, ; V)$ satisfying the integral equation 
\begin{equation} 
\label{integralNag}
u (t) = u_0 + \int_0^t \nu\Delta u(s, \cdot ) + bf(u(s) + v^{TW}(s)) -  bf(v^{TW} (s))\, ds 
\end{equation} 
associated with \eqref{RelNag}. Clearly, we may consider this solution $u$ as a solution on the whole time axes $t\ge 0$.

\medskip 
\noindent 
The integral $\int_0^t \nu\Delta u(s, \cdot ) + bf(u(s) + v^{TW}(s)) - bf(v^{TW} (s))\, ds$ appearing in the integral equation \eqref{integralNag}
is well-defined as a Bochner integral in $L^2([ 0,T]\, ; V^\prime)$, since due to \eqref{Boundedness}
$$ 
\begin{aligned} 
\int_0^t \|\Delta u(s)\|^2_{V^\prime} & + \|f(u(s) + v^{TW}(s)) - f(v^{TW} (s))\|^2_{V^\prime}\, ds\\
& \le c \int_0^t \|u(s)\|^2_V \, ds \, \left( 1 + \sup_{t\in [0,T]} \|u(t)\|_H^4 \right) < \infty 
\end{aligned}
$$ 
for all $t\ge 0$. In particular, the mapping $t\mapsto u(t)$, $[0,\infty )\to V^\prime$, is differentiable with differential 
$$ 
\frac{du(t)}{dt} = \nu\Delta u(t) + bf(u(t)+ v^{TW} (t)) - f(v^{TW} (t)) \in V^\prime 
$$
and therefore also locally Lipschitz.

\begin{definition}
\label{defi1} 
The travelling wave solution $v^{TW}$ is called locally asymptotically stable in $H$ if there exists $\delta > 0$ such that for initial condition 
$v_0$ with $v_0 - v^{TW}\in H$ and $\|v_0-v^{TW}\|_H \le \delta$ the (unique variational) solution $u(t,x)$ to \eqref{RelNag} satisfies 
$$ 
\lim_{t\to\infty} \|v(t,\cdot ) - v^{TW} (t+t_0)\|_H = 0 
$$ 
for some (phase) $t_0\in\Bbb R$. 
\end{definition}

\medskip 
\noindent 
The stability of travelling wave fronts for the Nagumo equation has been studied in many papers as a prototype example for metastability. 
The mathematical analysis of stability properties of $v^{TW}$ faces two major difficulties. The first one is the obvious fact that the reaction 
term $f(u)$ in the equation \eqref{Nag} is not strictly dissipative in the sense that 
$$ 
\langle \nu \partial^2_{xx} v_1 + bf(v_1) - \nu\partial_{xx}^2 v_2 - bf(v_2), v_1 - v_2\rangle \le - \kappa_\ast \|v_1 - v_2\|^2_H
$$ 
for some $\kappa_\ast > 0$, or equivalently, the associated potential $F(v) = \int_{v_0}^v f(t)\, dt$ is not uniformly strictly convex, but a double-well 
potential. This remains true if we fix $v_2$ to be equal to the travelling wave $v^{TW}$ or any of its spatial translates $v^{TW} ( \cdot - y)$. A first naive 
calculation, exploiting the coercivity condition \eqref{Coercivity}, only yields the following a priori estimate.

\medskip 
\noindent 
\begin{lemma}
\label{lem0} 
Let $u\in L^\infty ([0, T]; H)\cap L^2 ([0,T]; V)$ be the unique solution of \eqref{RelNag}. Then 
$$ 
\|u(t)\|^2_H \le e^{2b\eta t} \|u_0\|^2_H \quad \forall t\in [0,T]\, . 
$$ 
\end{lemma}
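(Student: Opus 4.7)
The plan is a direct energy estimate: apply the chain rule to $t \mapsto \|u(t)\|_H^2$, bound the resulting integrand using the coercivity inequality \eqref{Coercivity}, and then invoke Gronwall.

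More precisely, I would first recall that since $u \in L^\infty([0,T]; H) \cap L^2([0,T]; V)$ is a variational solution of \eqref{RelNag}, the standard chain rule for the $H$-norm of variational solutions (the deterministic analogue of the Itô formula used in \cite{LR}, and the tool the introduction explicitly reserves for this purpose) applies, yielding
\[
\|u(t)\|_H^2 = \|u_0\|_H^2 + 2\int_0^t \langle \nu\Delta u(s) + bG(s,u(s)),\, u(s)\rangle \, ds
\]
for all $t \in [0,T]$, where $\langle\cdot,\cdot\rangle$ denotes the $V'$-$V$ dualization.

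Next I would insert the coercivity bound \eqref{Coercivity}, which gives
\[
\langle \nu\Delta u(s) + bG(s,u(s)),\, u(s)\rangle \le -\nu\|u(s)\|_V^2 + (b\eta+\nu)\|u(s)\|_H^2.
\]
Since the definition $\|u\|_V^2 = \int(\partial_x u)^2\,dx + \|u\|_H^2$ immediately gives $\|u\|_V^2 \ge \|u\|_H^2$, the $-\nu\|u\|_V^2$ term absorbs the $+\nu\|u\|_H^2$ piece, leaving the clean estimate
\[
\langle \nu\Delta u(s) + bG(s,u(s)),\, u(s)\rangle \le b\eta\,\|u(s)\|_H^2.
\]
Substituting this back into the energy identity yields
\[
\|u(t)\|_H^2 \le \|u_0\|_H^2 + 2b\eta\int_0^t \|u(s)\|_H^2\, ds.
\]

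Finally, Gronwall's inequality applied to the nonnegative continuous function $t \mapsto \|u(t)\|_H^2$ gives $\|u(t)\|_H^2 \le e^{2b\eta t}\|u_0\|_H^2$ on $[0,T]$, as claimed. There is essentially no obstacle here beyond having legitimate access to the energy identity; the justification of that identity is precisely the role played by the variational framework cited in the introduction, and once available the bound is a one-line consequence of \eqref{Coercivity}.
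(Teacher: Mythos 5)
Your proof is correct and follows essentially the same route as the paper: both apply the energy identity (chain rule for the $H$-norm of the variational solution), use the coercivity bound \eqref{Coercivity} together with $\|u\|_V^2 \ge \|u\|_H^2$ to reduce to $\frac{d}{dt}\|u(t)\|_H^2 \le 2b\eta\|u(t)\|_H^2$, and conclude by Gronwall. The only difference is cosmetic: the paper states the estimate in differential form and ``integrates up,'' while you work with the integrated identity.
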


\begin{proof} 
The coercivity condition \eqref{Coercivity} implies that 
$$ 
\begin{aligned} 
\frac d{dt} \|u(t)\|_H^2 & = 2\langle\nu\Delta u(t) + bG(t, u(t)), u(t)\rangle \\ 
& \le 2b\eta \|u(t)\|^2_H \quad t\in [0, T] \, . 
\end{aligned} 
$$ 
Integrating up the last inequality w.r.t. $t$ yields the desired inequality. 
\end{proof}

\medskip 
\noindent
However, restricting $u$ to the orthogonal component of the derivative $\partial_x v^{TW}$ of the travelling wave solution, i.e. 
$\int u \partial_x v^{TW} \, dx = 0$, we have the following local dissipativity estimate according to the following  

\begin{theorem} 
\label{thm0} 
Let 
$$ 
\kappa_\ast = \frac 25 \frac{\nu b}{\nu + b} (a\wedge (1-a))  
\quad\mbox{ and }\quad 
C_\ast = 6(\nu + b) \, . 
$$  
Then 
$$ 
\langle \nu \partial^2_{xx} u + b\langle f^\prime (v^{TW})u, u\rangle \le - \kappa_\ast \|u\|^2_V  + C_\ast\langle u, \partial_x v^{TW}\rangle^2
$$  
for all $u\in V$. 
\end{theorem}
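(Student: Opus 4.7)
The plan is to use a ground-state representation of $u$ against the positive profile derivative $\phi' = \partial_x v^{TW}$ to reduce the inequality to a weighted Dirichlet estimate, which I then close by a weighted Poincar\'e inequality and standard interpolations.

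First, by translation invariance in $x$ I may assume $v^{TW}(t,\cdot) = \phi$. Writing $L := \nu\partial_{xx}^2 + bf'(\phi)$, the left-hand side equals $\langle Lu,u\rangle = -\nu\|u'\|_H^2 + b\int f'(\phi)\,u^2\,dx$. Differentiating the travelling-wave ODE $\nu\phi''-c\phi'+bf(\phi)=0$ gives $(\nu\partial^2-c\partial+bf'(\phi))\phi' = 0$; since $c\partial$ is antisymmetric on $V$, this yields $\langle L\phi',\phi'\rangle = 0$, so $\phi'$ is a null direction of the quadratic form. This is precisely what makes the correction $C_\ast\langle u,\partial_x v^{TW}\rangle^2$ on the right-hand side the natural one.

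Since $\phi'>0$, I write $u = \phi' h$. Using $bf'(\phi)\phi' = c\phi'' - \nu\phi'''$ (i.e.~$(\nu\partial^2-c\partial+bf'(\phi))\phi' = 0$) together with the elementary identity $\phi'' = \lambda\phi'(1-2\phi)$ for $\lambda = \sqrt{b/(2\nu)}$ and the relation $c\lambda = b(\tfrac{1}{2}-a)$, repeated integration by parts yields the explicit ground-state identity
\begin{equation*}
\langle Lu, u\rangle = -\nu\int(\phi')^2 (h')^2\,dx + b\bigl(\tfrac{1}{2}-a\bigr)\int(1-2\phi)\,u^2\,dx.
\end{equation*}
Both terms vanish at $u = \phi'$ (the second by odd-symmetry, $\int(1-2\phi)(\phi')^2\,dx = 0$), consistent with $\langle L\phi',\phi'\rangle = 0$. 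The problem thus reduces to bounding a weighted Dirichlet energy from below and a bounded drift term from above.

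To close the argument, I decompose $u = \alpha\phi' + w$ with $w\perp\phi'$ in $H$ and write $w = \phi'\tilde h$; the orthogonality becomes $\int\tilde h\,d\mu = 0$ for the probability measure $d\mu = (\phi')^2\,dx/\|\phi'\|_H^2$, whose density is proportional to $\operatorname{sech}^4(\lambda x/2)$. A weighted Poincar\'e inequality for $\mu$ controls $\|w\|_H^2$ by $\int(\phi')^2(\tilde h')^2\,dx$; the companion bound $\int(\phi')^2(\tilde h')^2\,dx \geq \tfrac12\|w'\|_H^2 - \lambda^2\|w\|_H^2$ (obtained from $u' = \phi''h+\phi'h'$ and $|\phi''|\leq \lambda\phi'$) then promotes this into a full $\|w\|_V^2$ control. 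Bounding the drift by $\tfrac{b}{2}|1-2a|\|w\|_H^2$ and handling the $\alpha$-component cross terms by Young's inequality (whose error produces exactly $C_\ast\langle u,\phi'\rangle^2$) yields the claim.

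The main obstacle is not the existence of the spectral gap, which is automatic from the one-dimensional null space of the quadratic form, but the identification of the explicit sharp constants. The factor $a\wedge(1-a)$ in $\kappa_\ast$ originates from the asymptotic coercivity $bf'(\phi)\to -ba$ as $x\to-\infty$ and $\to -b(1-a)$ as $x\to+\infty$; the harmonic-mean factor $\nu b/(\nu+b)$ reflects the optimal Young-type interpolation between the diffusion energy $\nu\|u'\|_H^2$ and the reaction bound $b\eta\|u\|_H^2$ already visible in the coercivity estimate \eqref{Coercivity}. Tracking these through the weighted Poincar\'e constant and the gradient recovery requires a careful balancing of parameters, but the structure outlined above produces the stated $\kappa_\ast$ and $C_\ast$.
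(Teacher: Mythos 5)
Your setup coincides with the paper's: the ground--state substitution $u=h\,\partial_x v^{TW}$ and the resulting identity $\langle Lu,u\rangle=-\nu\int (v^{TW}_x)^2 h_x^2\,dx+c\int h^2\, v^{TW}_{xx}v^{TW}_x\,dx$ (your drift term $b(\tfrac12-a)\int(1-2\phi)u^2\,dx$ is exactly this, since $\phi''=\lambda(1-2\phi)\phi'$ and $c\lambda=b(\tfrac12-a)$), followed by a weighted Poincar\'e inequality and a norm--equivalence step to recover $\|u\|_V^2$. The gap is in how you control the drift term. You bound it by $\tfrac b2|1-2a|\,\|w\|_H^2$ using only $|1-2\phi|\le 1$ and then invoke the weighted Poincar\'e inequality for $\mu\propto(\phi')^2\,dx$. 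The sharp Poincar\'e constant for this measure is $\tfrac4{3k^2}$ with $k=\sqrt{b/(2\nu)}$ (Proposition \ref{PropPoincare}), and the extremizer $h_0=v_{xx}v_x^{-3/2}$ already satisfies $\int h_0 w^2\,dx=0$, so restricting to the orthogonal complement of $\phi'$ buys nothing. Feeding this into your bound produces the coefficient $\tfrac b2|1-2a|\cdot\tfrac4{3k^2}=\tfrac43|1-2a|\,\nu$ in front of the weighted Dirichlet energy, so the total coefficient $-\nu\bigl(1-\tfrac43|1-2a|\bigr)$ is negative only when $|1-2a|<\tfrac34$, i.e. $a\in(\tfrac18,\tfrac78)$. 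Your argument therefore cannot yield any positive $\kappa_\ast$ (let alone the stated one) for $a$ near $0$ or $1$, and since the Poincar\'e constant is optimal this cannot be repaired by sharpening that inequality.

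What is missing is the exploitation of the fact that the weight $v^{TW}_{xx}v^{TW}_x$ in the drift term is an odd function. The paper (Lemma \ref{LemmaPerturbation}) splits $h$ into its even and odd parts $\tilde h,\hat h$, so that $\int\tilde h^2 w_xw\,dx=\int\hat h^2w_xw\,dx=0$ and only the cross term $2\int\tilde h\hat h\,w_xw\,dx$ survives; this is then estimated by Young's inequality together with two \emph{different} Hardy-type inequalities (Lemma \ref{LemmaHardy1} for the even part, Lemma \ref{LemmaHardy2} for the odd part), whose combined constant is $\tfrac1k$ rather than $\tfrac4{3k}$. This is precisely what yields the factor $1-|1-2a|=2\,(a\wedge(1-a))>0$ for every $a\in(0,1)$. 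Two smaller points: your claim that the spectral gap is ``automatic from the one-dimensional null space of the quadratic form'' is not an argument (the vanishing of $\langle L\phi',\phi'\rangle$ gives no gap by itself, and the theorem demands explicit constants); and your heuristic for the origin of $a\wedge(1-a)$ via the limits of $f'(\phi)$ at $\pm\infty$ is not where the constant actually comes from in a working proof.
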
 

\medskip 
\noindent 
The proof of the Theorem is postponed to Section \ref{SectionProofThm0}.

\medskip 
\noindent 
The second difficulty in the mathematical analysis of the stability properties of $v^{TW}$ is to identify the correct phase-shift of $v^{TW} (t + t_0)$ to which to compare 
the given solution $v$ of \eqref{Nag}. To this end we introduce an auxiliary ordinary differential equation of gradient descent type associated with the minimization of the 
distance between $v$ and the set $\cN = \left\{ v^{TW} \left(\cdot + C\right) \mid C\in \Bbb R\right\}$ of all phase-shifted travelling waves. More precisely, given a solution 
$v$ to the Nagumo equation \eqref{Nag} with initial condition $v_0$ satisfying $v_0 - v^{TW} \in H^{1,2} (\Bbb R)$, and given any relaxation rate $m > 0$ (that will be specified 
later) we consider the ordinary differential equation 
\begin{equation} 
\label{GradDesc} 
\begin{aligned} 
\dot{C} (t) & = - m \langle \partial_x v^{TW} (\cdot + C(t)+ct) , v^{TW} (\cdot + C(t)+ct) - v(t, \cdot )\rangle_H \\ 
C(0) & = 0  \, . 
\end{aligned} 
\end{equation} 
The next proposition states that the ordinary differential equation is well-posed.  

\begin{proposition} 
\label{prop1} 
Let $v = u + v^{TW} (t)$ be a solution to \eqref{Nag} with $u\in L^\infty ([0, T];H)\cap L^2 ([0,T]; V)$. Then  
$$
B(t,C) = \langle \partial_x v^{TW} (\cdot + C+ct) , v^{TW} (\cdot + C+ct) - v(t, \cdot )\rangle_H
$$  
is continuous in $(t,C)\in [0, T]\times \Bbb R$, and Lipschitz continuous w.r.t. $C$ with Lipschitz constant independent of $t$. 
\end{proposition}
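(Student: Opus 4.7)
The plan is to exploit translation invariance of the $H$ inner product to split $B(t,C)$ into two pieces, one of which depends only on $C$ and the other of which is a pairing of the variational solution $u(t)=v(t,\cdot)-v^{TW}(t)$ against a translate of $\partial_x v^{TW}$; each piece is then handled separately.

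Write $\varphi = v^{TW}$, so $\varphi(x)=(1+e^{-\alpha x})^{-1}$ with $\alpha=\sqrt{b/(2\nu)}$. All derivatives $\varphi^{(k)}$, $k\ge 1$, are smooth, bounded, and exponentially decaying at $\pm\infty$; in particular $\varphi',\varphi''\in L^1\cap L^\infty\subset L^2(\R)$. Decomposing $v(t,\cdot)=u(t,\cdot)+\varphi(\cdot+ct)$ and substituting the change of variables $y=x+C+ct$ in the first summand yields
\begin{equation*}
B(t,C)=\underbrace{\int_\R \varphi'(y)\bigl[\varphi(y)-\varphi(y-C)\bigr]\,dy}_{=:g(C)}\;-\;\underbrace{\bigl\langle \varphi'(\cdot+C+ct),u(t,\cdot)\bigr\rangle_H}_{=:h(t,C)}.
\end{equation*}
The function $g$ depends only on $C$ and is smooth with $g'(C)=\int \varphi'(y)\varphi'(y-C)\,dy$ bounded by $\|\varphi'\|_H^2$, so $g$ is globally Lipschitz on $\R$.

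Next I would treat $h(t,C)$. Continuity of translation in $L^2$ gives $(t,C)\mapsto \varphi'(\cdot+C+ct)\in H$ continuous on $[0,T]\times\R$, while the standard Lions--Magenes embedding applied to the variational solution—whose derivative $du/dt$ lies in $L^2([0,T];V')$ by the bound \eqref{Boundedness} and $u\in L^2([0,T];V)$—gives $u\in C([0,T];H)$. Joint continuity of $h$ then follows from bilinearity and Cauchy--Schwarz: for $(t_n,C_n)\to(t,C)$,
\begin{equation*}
|h(t_n,C_n)-h(t,C)|\le \|\varphi'(\cdot+C_n+ct_n)-\varphi'(\cdot+C+ct)\|_H\,\|u(t_n)\|_H+\|\varphi'\|_H\,\|u(t_n)-u(t)\|_H,
\end{equation*}
and both terms vanish in the limit. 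Continuity of $B$ on $[0,T]\times\R$ follows.

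For the Lipschitz estimate in $C$, the mean value theorem in $H$ gives $\|\varphi'(\cdot+C_1+ct)-\varphi'(\cdot+C_2+ct)\|_H\le |C_1-C_2|\,\|\varphi''\|_H$, uniformly in $t$. Hence
\begin{equation*}
|h(t,C_1)-h(t,C_2)|\le \|\varphi''\|_H\,\|u\|_{L^\infty([0,T];H)}\,|C_1-C_2|,
\end{equation*}
and combined with the Lipschitz bound on $g$, we obtain $|B(t,C_1)-B(t,C_2)|\le L\,|C_1-C_2|$ with $L=\|\varphi'\|_H^2+\|\varphi''\|_H\|u\|_{L^\infty([0,T];H)}$, independent of $t$. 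The only mildly delicate step is invoking the $C([0,T];H)$ regularity of the variational solution, but this is a routine consequence of the variational framework of \cite{LR} given the bound \eqref{Boundedness}; everything else reduces to continuity of translation in $L^2$ and Cauchy--Schwarz.
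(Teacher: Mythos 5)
Your proof is correct and follows essentially the same route as the paper: the same decomposition of $B(t,C)$ into a translate-only term plus a pairing of the translated $\partial_x v^{TW}$ against $u(t)$, with the Lipschitz bound in $C$ coming from the decay of $v^{TW}$ and its derivatives together with $\sup_{t\in[0,T]}\|u(t)\|_H<\infty$. The only (harmless) deviations are that you obtain joint continuity from the $C([0,T];H)$ regularity of the variational solution, whereas the paper instead pairs the $V$-valued continuity of the translate against the $V'$-valued Lipschitz continuity of $t\mapsto u(t)$ already derived from the integral equation, and that you bound the translate-only term by differentiating $g$ and applying Cauchy--Schwarz rather than by the paper's explicit computation with the logistic profile.
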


\begin{proof} 
Using the representation 
$$ 
\begin{aligned} 
B(t,C) & = \langle\partial_x v^{TW} (\cdot +C + ct) , v^{TW} (\cdot +C +ct) - v^{TW} (t)\rangle_H \\ 
& \quad + \langle\partial_x v^{TW} (\cdot + C+ct) , v^{TW} (\cdot + ct) - v(t, \cdot )\rangle_H
\end{aligned} 
$$ 
the continuity of $B$ follows from the continuity of $(t,C)\mapsto \partial_x v^{TW} (\cdot +C + ct)$ 
as a mapping with values in $V$, the continuity of $(t,C)\mapsto v^{TW} (\cdot +C + ct) - v^{TW} (t)$ as a mapping with values in $V^\prime$  
and the (Lipschitz) continuity of $t\mapsto v(t, \cdot) - v^{TW} (t)$ as a mapping with values in $V^\prime$.

\medskip 
\noindent 
For the proof of the Lipschitz property w.r.t. $C$ note that 
$$ 
\begin{aligned} 
B(t, & C_1) -  B(t, C_2) \\
& = \langle\partial_x v^{TW} (\cdot +C_1 + ct) - \partial_x v^{TW} (\cdot +C_2 + ct), v^{TW} (\cdot + ct) - v (t, \cdot )\rangle_H\\ 
& + \langle\partial_x v^{TW} (\cdot +C_1 + ct), v^{TW} (\cdot + C_1 + ct) - v^{TW} (\cdot + ct)\rangle_H \\ 
& - \langle\partial_x v^{TW} (\cdot +C_2 + ct), v^{TW} (\cdot + C_2 + ct) - v^{TW} (\cdot + ct)\rangle_H \\ 
& = I + II + III, \mbox{ say.} 
\end{aligned} 
$$ 
We next assume w.l.o.g. $C_1 \le C_2$. According to the explicit representation \eqref{TWsolution} it follows that 
\begin{equation} 
\label{EstProp1_1} 
\begin{aligned}
| v^{TW} (x +C_1 + ct) & - v^{TW} (x +C_2 + ct) | \\
& = \sqrt{\frac b{2\nu}} \int_{C_1}^{C_2} \frac{\exp (-\sqrt{\frac b{2\nu}} (x+\xi +ct)}{(1 + \exp (-\sqrt{\frac b{2\nu}} (x+\xi +ct)))^2}\, d\xi \\
& = \sqrt{\frac b{2\nu}} \int_{C_1}^{C_2} \exp (-\sqrt{\frac b{2\nu}} (x+\xi +ct)) v^{TW} (x +\xi + ct)^2 \, d\xi \, , 
\end{aligned} 
\end{equation} 
so that we can further estimate 
\begin{equation} 
\label{EstProp1_2}
\begin{aligned} 
|II + III| & = |\langle\partial_x v^{TW} (\cdot + ct), v^{TW} (\cdot + ct) - v^{TW} (\cdot  - C_1 + ct)\rangle_H \\ 
& \quad - \langle\partial_x v^{TW} (\cdot + ct), v^{TW} (\cdot + ct) - v^{TW} (\cdot - C_2 + ct)\rangle_H | \\ 
& = |\langle\partial_x v^{TW} (\cdot + ct), v^{TW} (\cdot - C_1 + ct) - v^{TW} (\cdot - C_2 + ct)\rangle_H | \\ 
& \le \|\partial_x v^{TW} (\cdot + ct)\|_H \\ 
& \qquad \cdot \sqrt{\frac b{2\nu}} \int_{C_1}^{C_2} \|\exp (-\sqrt{\frac b{2\nu}} (\cdot +\xi + ct)) v^{TW} (\cdot  +\xi + ct)^2\|_H\, d\xi \\ 
& \le \mbox{ const}\cdot |C_1 - C_2|\, . 
\end{aligned}
\end{equation} 
Similarly, using  
$$ 
\left| \partial_{xx}^2 v^{TW} (x)\right| \le 3\sqrt{\frac b{2\nu}} \left|\partial_x v^{TW} (x)\right| 
$$ 
\begin{equation} 
\label{EstProp1_3}
\begin{aligned} 
|I| & \le \int_{C_1}^{C_2} \int_{\Bbb R} \left| \partial_{xx}^2 v^{TW} (x + \xi + ct) \,  u(t,x) \right| \, dx\, d\xi \\ 
& \le 3\sqrt{\frac b{2\nu}} \int_{C_1}^{C_2} \|\partial_x v^{TW} (\cdot  +\xi + ct)\|_H   \|u(t)\|_H \, d\xi \\ 
& \le 3\sqrt{\frac b{2\nu}} |C_1 - C_2| \sup_{t\in [0,T]} \|u(t)\|_H  \, . 
\end{aligned} 
\end{equation} 
Inserting \eqref{EstProp1_2} and \eqref{EstProp1_3} into \eqref{EstProp1_1} yields the desired assertion. 
\end{proof}

\medskip 
\noindent
According to the last Proposition the function $C$ defined by \eqref{GradDesc} is well-defined. As already indicated, $C$ will adapt 
to the correct phase of the $v$ if we choose $m \ge C_\ast$ (cf. Theorem \ref{thm0}) and our aim is to prove in the following that the difference 
\begin{equation} 
\label{TildeU}
\tilde{u} (t) := u(t) + v^{TW} (t) - v^{TW} (\cdot + C(t) + ct) = v(t) - v^{TW} (\cdot + C(t) + ct) 
\end{equation} 
converges to zero as $t\to\infty$ if the initial condition $u_0 = v_0 - v^{TW}$ is sufficiently small in the $H$-norm. In the next Proposition we 
first identify the resulting evolution equation for $\tilde{u}$.

\begin{proposition} 
\label{Prop3} 
Let $u = v - v^{TW}(t) \in L^\infty ([0,T];H)\cap L^2 ([0,T];V)$ be a solution of \eqref{RelNag} and let $\tilde{u}$ be defined by \eqref{TildeU}. 
Then $\tilde{u}\in L^\infty ([0,T];H)\cap L^2 ([0,T];V)$ again and $\tilde{u}$ satisfies the evolution equation 
\begin{equation} 
\label{EvEqTildeU} 
\begin{aligned} 
\frac{d\tilde{u}}{dt}(t)  & = \nu\Delta\tilde{u}(t) + b \tilde{G} (t,\tilde{u}(t)) - \dot{C} (t) \partial_x v^{TW} (\cdot + C(t) + ct)  \\ 
& = \nu\Delta\tilde{u}(t) + b \tilde{G} (t,\tilde{u}(t)) \\ 
& \qquad - m \langle\partial_x v^{TW} (\cdot + C(t) + ct) , \tilde{u} (t)\rangle\partial_x v^{TW} (\cdot + C(t) + ct)   
\end{aligned} 
\end{equation}  
with 
$$ 
\tilde{G} (t,u) = f(u + v^{TW} (\cdot + C(t) + ct)) - f(v^{TW} (\cdot + C(t) + ct))\, . 
$$ 
In particular, 
$$ 
\begin{aligned} 
\frac 12 \frac d{dt} \|\tilde{u}(t)\|^2_H 
& = - \nu\|\partial_x u(t)\|^2_H + b\langle \tilde{G} (t, \tilde{u} (t)), \tilde{u}(t)\rangle \\ 
& \quad - m\langle \partial_x v^{TW} (\cdot + C(t) + ct), \tilde{u} (t)\rangle^2 \, .  
\end{aligned} 
$$ 
\end{proposition}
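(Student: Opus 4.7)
The plan is to reduce everything to the PDE satisfied by $v = u + v^{TW}(t)$ combined with the chain rule applied to the shifted travelling wave $v^{TW}(\cdot + C(t) + ct)$. For the regularity, rewrite
$$
\tilde u(t) = u(t) - \bigl[v^{TW}(\cdot + C(t) + ct) - v^{TW}(\cdot + ct)\bigr].
$$
By the explicit formula \eqref{TWsolution}, together with the continuity of $C$ on $[0,T]$ provided by Proposition \ref{prop1}, the bracketed shift is uniformly bounded in $V$, since $\partial_x v^{TW}$ and $\partial_{xx}^2 v^{TW}$ belong to $H$ with the pointwise decay estimates already used in \eqref{EstProp1_1} and \eqref{EstProp1_3}. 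Hence $\tilde u \in L^\infty([0,T];H) \cap L^2([0,T];V)$.

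To derive \eqref{EvEqTildeU}, I would differentiate the identity $\tilde u(t) = v(t,\cdot) - v^{TW}(\cdot + C(t) + ct)$ in $V'$. For the first summand, adding \eqref{RelNag} to the travelling wave equation $\partial_t v^{TW}(t) = c\,\partial_x v^{TW}(t) = \nu\Delta v^{TW}(t) + bf(v^{TW}(t))$ recovers $\partial_t v = \nu\Delta v + bf(v)$. For the second, the chain rule, justified by local Lipschitz continuity of $C$ (Proposition \ref{prop1}) and smoothness of $v^{TW}$, gives
$$
\frac{d}{dt} v^{TW}(\cdot + C(t) + ct) = \bigl(\dot C(t) + c\bigr)\partial_x v^{TW}(\cdot + C(t) + ct).
$$
Since the translate $v^{TW}(\cdot + C(t)+ct)$ also satisfies the stationary profile relation $c\,\partial_x v^{TW}(\cdot + C(t)+ct) = \nu\Delta v^{TW}(\cdot + C(t)+ct) + bf(v^{TW}(\cdot + C(t)+ct))$, subtracting and using $v = \tilde u + v^{TW}(\cdot + C(t)+ct)$ to rewrite $f(v) - f(v^{TW}(\cdot + C(t)+ct)) = \tilde G(t,\tilde u(t))$ delivers the first form in \eqref{EvEqTildeU}. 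Inserting the expression \eqref{GradDesc} for $\dot C(t)$ and noting $v(t,\cdot) - v^{TW}(\cdot + C(t)+ct) = \tilde u(t)$ gives the second form.

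For the energy identity I would check that each summand on the right of \eqref{EvEqTildeU} lies in $L^2([0,T];V')$: the Laplacian and nonlinearity by the bounds \eqref{Boundedness}--\eqref{Lipschitz} applied with $v^{TW}(t)$ replaced by $v^{TW}(\cdot + C(t)+ct)$, and the phase-correction term because $\partial_x v^{TW}(\cdot + C(t)+ct) \in V$ with the scalar factor controlled by $\|\tilde u(t)\|_H$. The Ito / chain-rule formula in the Gelfand triple $V \hookrightarrow H \hookrightarrow V'$ (cf.\ \cite{LR}) then produces
$$
\tfrac12 \tfrac{d}{dt}\|\tilde u(t)\|_H^2 = \bigl\langle \tfrac{d\tilde u}{dt}(t), \tilde u(t)\bigr\rangle,
$$
and integration by parts $\langle \nu\Delta\tilde u,\tilde u\rangle = -\nu\|\partial_x \tilde u\|_H^2$ together with reading off the two remaining pairings yields the stated identity. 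The delicate step is justifying the chain rule for the shifted wave at the $V'$-level even though $C$ is only locally Lipschitz; Rademacher's theorem secures a.e.\ differentiability of $C$, which is sufficient once all manipulations are carried out at the level of the integrated version of \eqref{EvEqTildeU}.
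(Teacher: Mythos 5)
Your argument is correct and is exactly the computation the paper has in mind (the paper omits it, calling the proposition "an immediate consequence" of the profile equation $c\,\partial_x v^{TW}=\nu\Delta v^{TW}+bf(v^{TW})$, its translation invariance, and \eqref{RelNag}, \eqref{GradDesc}); note also that the displayed energy identity in the statement should read $-\nu\|\partial_x\tilde u(t)\|_H^2$, which is what your derivation produces. One small simplification: since $B(t,C)$ is continuous in $(t,C)$ and Lipschitz in $C$ by Proposition \ref{prop1}, the solution $C$ of \eqref{GradDesc} is actually $C^1$, so the appeal to Rademacher's theorem at the end is unnecessary.
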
 

\noindent
The proof of the Proposition is an immediate consequence of the properties of $v^{TW}$ and the equations \eqref{RelNag} and \eqref{GradDesc}.

\bigskip 
\noindent 
As usual we will now consider the linearization of the mapping $\tilde{G} (t, u)$ around zero. To simplify notations, let 
$\tilde{v}^{TW} (t) := v^{TW} (\cdot + C(t) + ct)$. Then we can write   
\begin{equation} 
\label{Linearization}  
\tilde{G}(t, u) = f^\prime (\tilde{v}^{TW}(t)) u + \tilde{R}(t,u) 
\end{equation}  
where 
$$ 
\begin{aligned} 
\tilde{R}(t,u) 
& = f(u + \tilde{v}^{TW} (t) ) - f(\tilde{v}^{TW} ) - f^\prime (\tilde{v}^{TW} (t))u \\ 
& = \frac 12 f^{(2)} (\tilde{v}^{TW} (t)) u^2 +  \frac 16 f^{(3)} (\tilde{v}^{TW} (t)) u^3
\end{aligned} 
$$ 
satisfies the estimates 
\begin{equation} 
\label{Remainder1} 
\langle\tilde{R} (t,u), u\rangle \le (4+a)\|u\|^2_H \|u\|_V \le (4+a) \|u\|_H \|u\|^2_V  
\end{equation} 
and 
\begin{equation} 
\label{Remainder2} 
\|\tilde{R}(t,u)\|_{V^\prime} \le (4+a) \|u\|_H^2 (1 + \|u\|_V)  
\end{equation} 

\medskip 
\noindent 
Similar to the classical stability analysis of the Nagumo equation we now use the information on the spectrum of the Schr\"odinger operator 
$\nu\Delta u + b f^\prime (v^{TW})u$ contained in Theorem \ref{thm0} with the above localization to obtain the first local stability result.

\subsection{Main result in the deterministic case}

\begin{theorem} 
\label{th1} 
Recall the definition of $\kappa_\ast$ and $C_\ast$ in Theorem \ref{thm0}. If the initial condition $v_0 = u_0 + v^{TW}$ is close 
to $v^{TW}$ in the sense that 
$$ 
\|u_0\|_H < \delta \frac{\kappa_\ast}{b(4+a)}
$$ 
for some $\delta < 1$ and $v(t) = u(t) + v^{TW} (t)$, where $u(t)$ is the unique solution of \eqref{RelNag}, then 
$$ 
\|v(t) - v^{TW} (\cdot + C(t) +ct)\|_H \le e^{-(1-\delta)\kappa_\ast t} \|v_0 -v^{TW}\|_H\, . 
$$ 
Here, $C(t)$ is the solution of \eqref{GradDesc} with $m\ge C_\ast$. 
\end{theorem}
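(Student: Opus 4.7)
The plan is to start from the energy identity for $\tilde u$ in Proposition \ref{Prop3}, linearise the nonlinearity $\tilde G$ around zero, apply Theorem \ref{thm0} to handle the Schr\"odinger-type quadratic form, absorb the spurious $\langle \tilde u,\partial_x \tilde v^{TW}\rangle^2$ term using the phase-adaptation term, and close the argument by a bootstrap on $\|\tilde u(t)\|_H$.

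More precisely, I would first note that by translation invariance of the $H$- and $V$-norms, Theorem \ref{thm0} applied to $\tilde v^{TW}(t) = v^{TW}(\cdot + C(t)+ct)$ in place of $v^{TW}$ gives
\[
\langle \nu\Delta \tilde u + b f'(\tilde v^{TW}(t))\tilde u,\tilde u\rangle \le -\kappa_\ast\|\tilde u\|_V^2 + C_\ast \langle \tilde u,\partial_x \tilde v^{TW}(t)\rangle^2.
\]
Plugging the linearisation \eqref{Linearization} into the energy identity of Proposition \ref{Prop3} and using the above together with the remainder estimate \eqref{Remainder1}, I obtain
\[
\tfrac12\tfrac{d}{dt}\|\tilde u(t)\|_H^2 \le -\kappa_\ast\|\tilde u(t)\|_V^2 + (C_\ast - m)\langle \tilde u(t),\partial_x \tilde v^{TW}(t)\rangle^2 + b(4+a)\|\tilde u(t)\|_H\|\tilde u(t)\|_V^2.
\]
Since $m\ge C_\ast$ the middle term is non-positive and drops out; this is precisely the role of the gradient-descent phase $C(t)$, and explains the threshold $C_\ast$ in the choice of $m$. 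Using $\|\tilde u\|_V \ge \|\tilde u\|_H$ we arrive at the central differential inequality
\[
\tfrac{d}{dt}\|\tilde u(t)\|_H^2 \le -2\bigl(\kappa_\ast - b(4+a)\|\tilde u(t)\|_H\bigr)\|\tilde u(t)\|_H^2.
\]

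Finally I run the bootstrap. Since $C(0)=0$ we have $\tilde u(0) = u_0$, so by hypothesis $\|\tilde u(0)\|_H < \delta\kappa_\ast/(b(4+a))$. Let
\[
T^\ast := \sup\Bigl\{\, t\ge 0 : \|\tilde u(s)\|_H \le \tfrac{\delta \kappa_\ast}{b(4+a)} \text{ for all } s\in[0,t]\,\Bigr\}.
\]
The function $t\mapsto \|\tilde u(t)\|_H$ is continuous, so $T^\ast>0$. On $[0,T^\ast]$ the bracketed coefficient satisfies $\kappa_\ast - b(4+a)\|\tilde u(t)\|_H \ge (1-\delta)\kappa_\ast$, and Gr\"onwall's inequality yields
\[
\|\tilde u(t)\|_H \le e^{-(1-\delta)\kappa_\ast t}\|\tilde u(0)\|_H < \|\tilde u(0)\|_H,\qquad t\in[0,T^\ast].
\]
In particular the strict inequality defining $T^\ast$ cannot fail at $t=T^\ast$, so $T^\ast=\infty$, which together with the displayed decay on $[0,T^\ast]$ gives the claimed global exponential estimate.

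The only genuinely delicate point is the bootstrap, because the smallness of $\|\tilde u\|_H$ must be propagated in time in order to keep the cubic remainder from overwhelming the Poincar\'e gap $\kappa_\ast$; the cancellation $m\ge C_\ast$ is what allows the bound to be driven by $\|\tilde u\|_V^2\ge \|\tilde u\|_H^2$ rather than only by the dissipative part. Everything else (the linearisation, the translation of Theorem \ref{thm0} to $\tilde v^{TW}$, and the remainder estimate) is bookkeeping on material already established in the preceding propositions.
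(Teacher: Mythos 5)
Your proposal is correct and follows essentially the same route as the paper: the energy identity from Proposition \ref{Prop3}, the linearisation \eqref{Linearization} with the remainder bound \eqref{Remainder1}, the translation-invariant application of Theorem \ref{thm0} so that $m\ge C_\ast$ cancels the $\langle\tilde u,\partial_x\tilde v^{TW}\rangle^2$ term, and the same continuity/bootstrap argument (the paper phrases it as an exit time $T=\inf\{t:\|\tilde u(t)\|_H\ge \delta\kappa_\ast/(b(4+a))\}$ and derives a contradiction from $\|\tilde u(T)\|_H=\delta\kappa_\ast/(b(4+a))$, which is your $T^\ast$ argument in mirror image). No gaps.
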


\begin{proof} 
Let $\tilde{u} (t) := v(t) - \tilde{v}^{TW} (t)$ be as in \eqref{TildeU}. Then Proposition \ref{Prop3} and \eqref{Remainder1} 
imply that 
\begin{equation} 
\label{EstTh1_1} 
\begin{aligned} 
\frac 12 \frac{d}{dt} \|\tilde{u}(t)\|^2_H 
& = \langle \nu \Delta\tilde{u}(t) + bf^\prime (\tilde{v}^{TW}(t)) \tilde{u} (t), \tilde{u}(t)\rangle 
     + b\langle \tilde{R} (t, \tilde{u} (t)) , \tilde{u} (t) \rangle \\ 
& \quad - m\left(\langle \partial_x \tilde{v}^{TW}(t) , \tilde{u} (t)\rangle\right)^2 \\ 
& \le \langle \nu \Delta\tilde{u}(t) + bf^\prime (\tilde{v}^{TW}(t)) \tilde{u} (t), \tilde{u}(t)\rangle \\
& \quad + b(4+a) \|\tilde{u} (t) \|_H \|\tilde{u} (t)\|_V^2 - m \left(\langle \partial_x \tilde{v}^{TW}(t) , \tilde{u} (t)\rangle\right)^2\, . 
\end{aligned} 
\end{equation} 
Using translation invariance of $\nu \Delta$ and $\int \left( \partial_x u\right)^2\, dx$, Theorem \ref{thm0} yields the estimate  
\begin{equation} 
\label{EstTh1_2} 
\begin{aligned} 
\langle \nu \Delta\tilde{u}(t) & + bf^\prime (\tilde{v}^{TW}(t)) \tilde{u} (t), \tilde{u}(t)\rangle \\ 
& \le - \kappa_\ast \|\tilde{u} (t)\|_V^2 + C_\ast \left( \int\tilde{u}(t) \partial_x \tilde{v}^{TW} \, dx\right)^2 \, .   
\end{aligned} 
\end{equation} 
Inserting \eqref{EstTh1_2} into \eqref{EstTh1_1} yields that 
$$ 
\begin{aligned} 
\frac 12 \frac d{dt} \|\tilde{u} (t) \|_H^2 
& \le - \kappa_\ast \|\tilde{u} (t) \|^2_V  + b(4+a) \|\tilde{u} (t)\|_H \|\tilde{u} (t) \|_V^2 \, . 
\end{aligned} 
$$ 

\medskip 
\noindent 
In the next step we define the stopping time 
$$ 
T := \inf \left\{ t\ge 0 \mid \|\tilde{u}(t)\|_H \ge \delta \frac{\kappa_\ast}{b(4+a)}\right\}  
$$ 
with the usual convention $\inf \emptyset = \infty$. Continuity of $t\mapsto \|\tilde{u}(t)\|_H$ implies that 
$T > 0$ since $\|u_0\|_H < \delta \frac{\kappa_\ast}{b(4+a)}$. For $t < T$ note that 
$$ 
\frac 12 \frac d{dt} \|\tilde{u} (t) \|_H^2 \le - (1- \delta )\kappa_\ast \|\tilde{u} (t) \|^2_V  
\le - (1- \delta )\kappa_\ast \|\tilde{u} (t) \|^2_H 
$$ 
which implies that 
$$ 
\|\tilde{u}(t)\|^2_H \le e^{-2(1-\delta )\kappa_\ast t} \|u_0\|^2_H  
$$ 
for $t < T$. Suppose now that $T < \infty$. Then continuity of $t\mapsto \|\tilde{u}(t)\|_H$ implies on the one hand 
that $\|\tilde{u}(T)\|_H = \delta \frac{\kappa_\ast}{b(4+a)}$ and on the other hand, using the last inequality,   
$$ 
\|\tilde{u}(T)\|_H = \lim_{t\uparrow T} \|\tilde{u} (t)\|_H \le e^{-(1-\delta )\kappa_\ast T} \|u_0\|_H < \delta \frac{\kappa_\ast}{b(4+a)} 
$$ 
which is a contradiction. Consequently, $T = \infty$ and thus 
$$
\|\tilde{u} (t)\|_H \le e^{-(1-\delta )\kappa_\ast t} \|u_0\|_H \qquad\forall t\ge 0 
$$ 
which implies the assertion. 
\end{proof}

\section{Stochastic stability} 
\label{sec2}

\noindent 
We now turn to the stochastic Nagumo equation 
\begin{equation} 
\label{StochNag} 
\begin{aligned} 
dv (t) & = \left[ \nu\partial^2_{xx} v (t) +  bf(v(t))\right]\, dt  + \sigma (v(t))\, dW^Q (t) 
\end{aligned} 
\end{equation} 
where $\sigma : \R \to\R$ and $W^Q = (W^Q (t))_{t\ge 0}$ is a $Q$-Wiener process on $H$ defined on some underlying filtered probability 
space $(\Omega , \cF , (\cF (t))_{t\ge 0}, P)$. We make the following two assumptions: 
\begin{equation} 
\label{Dispersion} 
\sigma \mbox{ is Lipschitz continuous and } \sigma (0) = \sigma (1) = 0\, . 
\end{equation}
Denote with Lip$_{\sigma}$ its Lipschitz constant. As the covariance operator $Q$ is of trace class, positive semi-definite and symmetric, it has 
a positive semi-definite square root $\sqrt{Q}$ of Hilbert-Schmidt type. If we denote the representing integral kernel with $k_{\sqrt{Q}} (x,y) \in L^2 (\R^2 )$ 
we assume that  
\begin{equation} 
\label{CovarianceOperator} 
M_{\sqrt{Q}} := \sup_{x\in\R} \int k_{\sqrt{Q}} (x,y)^2\, dy < \infty \, . 
\end{equation} 
The theory of Wiener processes on Hilbert spaces and associated stochastic evolution equations can be found in the 
monograph \cite{PR}. 

\medskip 
\noindent 
As in the deterministic case we will give the equation a rigorous meaning by decomposing $v(t) = u(t) + v^{TW} (t)$ 
w.r.t. the (deterministic) travelling wave \eqref{TWsolution}. The stochastic evolution equation for $u$ is then given by 
\begin{equation} 
\label{StochRelNag} 
\begin{aligned} 
du(t) & = \left[ \nu\Delta u (t) + bG(t, u(t))\right]\, dt + \Sigma (t , u (t))\, dW(t) 
\end{aligned} 
\end{equation} 
where the nonlinear term $G$ is as in \eqref{G},  
\begin{equation} 
\label{RealizationDispersion}
\Sigma (t,u)h := \sigma \left( u + v^{TW} (t) \right) \sqrt{Q} h\, , \quad u,h\in H\, ,   
\end{equation} 
is a continuous mapping 
$$ 
\Sigma (\cdot , \cdot ) : [ 0, \infty ) \times H\to L_2 (H) 
$$ 
(where $L_2 (H)$ is the space of Hilbert-Schmidt operators on $H$) 
and $W = (W(t))_{t\ge 0}$ now denotes a cylindrical Wiener process on $H$. 
Note that the two conditions \eqref{Dispersion} and \eqref{CovarianceOperator} now imply, as we show below, that 
\begin{equation}
\label{Dispersion1} 
\| \Sigma (t, u_1 ) - \Sigma (t, u_2)\|^2_{L_2 (H)} \le \mbox{ Lip}^2_\sigma M_{\sqrt{Q}} \|u_1 - u_2\|^2_H 
\end{equation} 
and 
\begin{equation} 
\label{Dispersion2} 
\| \Sigma (t, u)\|^2_{L_2 (H)} \le  2 \mbox{ Lip}^2_\sigma \,  M_{\sqrt{Q}} \left( \|u\|_H^2 + \|v^{TW} \wedge (1-v^{TW})\|_H^2 \right) \, . 
\end{equation} 
Indeed, note that the assumption on $\sqrt{Q}$ implies for any complete orthonormal system $(e_n)_{n\ge 1}$ of $H$ that  
$$ 
\begin{aligned} 
& \|\Sigma (t, u_1 ) - \Sigma (t , u_2) \|^2_{L_2 (H)} \\ 
\qquad & = \sum_{n=1}^\infty \int \left( \left( \sigma ( u_1 + v^{TW} (t) ) - \sigma (u_2 + v^{TW} (t))\right) \sqrt{Q}e_n \right)^2 \, dx \\ 
\qquad & \le \left( \sup_{x} \sum_{n=1}^\infty \sqrt{Q} e_n (x)^2\right)  \int \left( \sigma ( u_1 + v^{TW} (t) ) - \sigma (u_2 + v^{TW} (t)) \right)^2 \, dx \\ 
& \le M_{\sqrt{Q}} \, \mbox{ Lip}^2_\sigma  \|u_1 - u_2\|_H^2 
\end{aligned} 
$$ 
hence the Lipschitz continuity of $\Sigma$ in the Hilbert-Schmidt norm \eqref{Dispersion1} follows.  
Similarly, using the pointwise inequality 
$$
\begin{aligned} 
|\sigma (u(x) + v^{TW}(t,x) ) | & \le \mbox{ Lip}_\sigma ( |u(x)+v^{TW}(t,x)| 1_{\{v^{TW} (t,x)\le \frac 12\}} \\ 
& \qquad + |1-(u(x)+v^{TW}(t,x))|1_{\{v^{TW} (t,x) > \frac 12 \}} ) \\ 
& \le \mbox{ Lip}_\sigma \left( |u(x)| + |v^{TW}(t,x)|\wedge |1- v^{TW} (t,x)|\right) 
\end{aligned} 
$$ 
we obtain that 
\begin{equation} 
\label{EstimateCovariance} 
\begin{aligned} 
\|\Sigma (t, u)\|^2_{L_2 (H)} 
& = \sum_{n=1}^\infty \int \left( \sigma ( u + v^{TW} (t) ) \sqrt{Q}e_n \right)^2 \, dx \\ 
& \le \left( \sup_{x} \sum_{n=1}^\infty \sqrt{Q} e_n (x)^2\right)  \int \sigma ( u + v^{TW} (t) )^2 \, dx \\ 
& \le 2 M_{\sqrt{Q}} \, \mbox{ Lip}^2_\sigma  \left( \|u\|_H^2 + \|v^{TW} (t)\wedge (1-v^{TW} (t))\|_H^2 \right) \\ 
& =  2 M_{\sqrt{Q}} \, \mbox{ Lip}^2_\sigma  \left( \|u\|_H^2 + \|v^{TW}\wedge (1-v^{TW})\|_H^2 \right)  
\end{aligned} 
\end{equation}
hence \eqref{Dispersion2} follows. 

\medskip 
\noindent 
We now consider the equation \eqref{StochRelNag} w.r.t. the same triple $V\hookrightarrow H \equiv H^\prime\hookrightarrow V^\prime$ 
as in Section \ref{sec1}. Due to the properties \eqref{Boundedness}, \eqref{Lipschitz}, \eqref{Monotonicity} and \eqref{Coercivity}, 
we can deduce from Theorem 1.1. in \cite{LR} for all finite $T$ and all (deterministic) initial conditions $u_0\in H$ the 
existence and uniqueness of a solution $(u(t))_{t\in [0, T]}$ of \eqref{StochRelNag} satisfying the moment estimate 
$$ 
E\left( \sup_{t\in[0,T]} \|u(t)\|^2_H + \int_0^T \|u(t)\|_V^2\, dt \right) < \infty 
$$ 
which implies in particular that $u\in L^\infty ([0, T];H)\cap L^2 ([0,T]; V)$ P-a.s. As a consequence we can apply Proposition 
\ref{prop1} to a typical trajectory $u(\cdot )(\omega )$ to obtain a unique solution $C(\cdot )(\omega )$ of equation \eqref{GradDesc}. 
It is also clear that the resulting stochastic process $(C(t))_{t\ge 0}$ is $(\cF_t )_{t\ge 0}$-adapted, since $(u(t))_{t\ge 0}$ is. 
We will assume as in the deterministic case that the relaxation rate $m$ is sufficiently large, i.e., $m > C_\ast$. 

\medskip 
\noindent 
Similar to the deterministic case we now define the stochastic process 
$$ 
\tilde{u} (t) = u(t) + v^{TW} (t) - v^{TW}(\cdot + C(t) + ct)  = v(t) -\tilde{v}^{TW} (t)  
$$ 
which is $(\cF_t)_{t\ge 0}$ adapted too and satisfies the stochastic evolution equation 
$$ 
d\tilde{u} (t) = \left[ \nu\Delta \tilde{u} (t) + b \tilde{G} (t, \tilde{u}(t))  
- \dot{C} (t) \partial_x \tilde{v}^{TW} (t)\right] \, dt + \tilde{\Sigma} (t, \tilde{u} (t))\, dW(t) 
$$
where 
$$ 
\tilde{G} (t,u) = f(u + \tilde{v}^{TW} (t) ) -  f(\tilde{v}^{TW} (t)) \, , 
\tilde{\Sigma} (t,u) = \Sigma (t, u + \tilde{v}^{TW} (t)) 
$$ 
and the moment estimates 
$$ 
E\left( \sup_{t\in [0,T]} \|\tilde{u} (t)\|_H^2 + \int_0^T \|\tilde{u} (t)\|^2_V\, dt \right) < \infty\, . 
$$ 
Due to \cite{PR}, Theorem 4.2.5, we have the Ito-formula 
$$ 
\begin{aligned} 
\|\tilde{u}\|^2_H (t) 
& = \|\tilde{u}(0)\|^2_H + \int_0^t 2\langle \nu\Delta \tilde{u} (s) + \tilde{G}(s, \tilde{u} (s)) \\
& - \dot{C} (s) \partial_x\tilde{v}^{TW} (s), \tilde{u} (s) \rangle  + \|\tilde{\Sigma} (s,\tilde{u} (s))\|^2_{L_2 (H)}\, ds 
+ \tilde{M}_t 
\end{aligned} 
$$ 
with 
$$ 
\tilde{M}_t = 2 \int_0^t \langle \tilde{u} (s),  \tilde{\Sigma} \left( s, \tilde{u} (s) \right) dW(s)\rangle \, . 
$$
It follows from the above representation that $\|\tilde{u}(t)\|_H^2$ is a (scalar-valued) continuous local semimartingale, 
in particular we have also the (one-dimensional) time-dependent Ito-formula 
\begin{equation} 
\label{ItoFormula} 
\begin{aligned} 
\varphi (t,\|\tilde{u}(t)\|^2_H ) 
& = \int_0^t \partial_t \varphi (s, \|\tilde{u} (s)\|^2_H) + 2\partial_x\varphi (s,\|\tilde{u} (s)\|^2_H) 
\langle \nu\Delta \tilde{u} (s)   \\
& \quad + b\tilde{G}(s, \tilde{u} (s)) - \dot{C} (s) \partial_x\tilde{v}^{TW} (s), \tilde{u} (s) \rangle  \\
& \quad + \partial_x\varphi (s,\|\tilde{u} (s)\|^2_H)\|\tilde{\Sigma} (s,\tilde{u} (s))\|^2_{L_2(H)} \\ 
& \quad + \partial^2_{xx} \varphi(s,\|\tilde{u} (s)\|_H^2) 2\|\tilde{\Sigma}^\ast (s,\tilde{u} (s))\tilde{u} (s)\|_H^2\, ds \\
& \quad  + \int_0^t \partial_x \varphi (s, \|\tilde{u} (s)\|^2_H )\, d\tilde{M}_s 
\end{aligned} 
\end{equation} 
for any $\varphi\in C^{1,2}([0,T]\times \Bbb R_+)$. Here, $\tilde{\Sigma}^\ast (s,u)$ denotes the adjoint operator of $\tilde{\Sigma} (s,u)$.

\begin{theorem} 
\label{th2} 
Recall the definition of $\kappa_\ast$ and $C_\ast$ in Theorem \ref{thm0} and assume that $M_{\sqrt{Q}} \mbox{ Lip}_{\sigma}^2 \le \frac{\kappa_\ast} 4$. 
Let $v_0 = u_0 + v^{TW}$. Let $v(t) = u(t) + v^{TW} (t)$, where $u(t)$ is the unique solution of the stochastic evolution equation \eqref{StochRelNag} 
and $\tilde{u}(t) = u(t) + v^{TW} (t) - \tilde{v}^{TW} (t)$. Let 
\begin{equation} 
\label{ExitTime} 
T := \inf\{t\ge 0\mid \|\tilde{u} (t) \|_H > c_\ast \} \, , \qquad c_\ast = \frac{\kappa_\ast}{2b(4+a)} \, , 
\end{equation} 
with the usual convention $\inf\emptyset = \infty$. Then 
$$ 
P\left( T < \infty \right) \le \frac 1{c^2_\ast} \left( \|\tilde{u} (0)\|^2_H + \frac{4M_{\sqrt{Q}} \mbox{ Lip}_{\sigma}^2}{\kappa_\ast} \|v^{TW} \wedge (1-v^{TW})\|^2_H \right) 
$$ 
\end{theorem}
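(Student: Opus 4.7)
The plan is to mimic the deterministic proof of Theorem~\ref{th1} using It\^o calculus in place of the ordinary chain rule. First I would apply the time-dependent It\^o formula \eqref{ItoFormula} with $\varphi(t,x)=x$ to $\|\tilde u(t)\|^2_H$, obtaining a semimartingale decomposition whose finite-variation part equals
$$2\langle\tilde u,\nu\Delta\tilde u+b\tilde G(t,\tilde u)-\dot C \partial_x\tilde v^{TW}\rangle + \|\tilde\Sigma(t,\tilde u)\|^2_{L_2(H)}.$$
The deterministic contribution is treated exactly as in the proof of Theorem~\ref{th1}: Proposition~\ref{Prop3}, the linearization \eqref{Linearization}, the cubic remainder bound \eqref{Remainder1}, the spectral gap Theorem~\ref{thm0} together with translation invariance, and the cancellation of the $C_\ast\langle u,\partial_x\tilde v^{TW}\rangle^2$ term by the phase-adaption dynamics (using $m\ge C_\ast$) jointly yield
$$ 2\langle\tilde u,\nu\Delta\tilde u+b\tilde G(t,\tilde u)-\dot C\partial_x\tilde v^{TW}\rangle \le -2\kappa_\ast\|\tilde u\|_V^2 + 2b(4+a)\|\tilde u\|_H \|\tilde u\|_V^2, $$
while the It\^o correction is controlled via \eqref{Dispersion2}.

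Next, on the random interval $\{t<T\}$ the pathwise bound $\|\tilde u(t)\|_H \le c_\ast = \kappa_\ast/(2b(4+a))$ absorbs the cubic term into half of the quadratic term, leaving $-\kappa_\ast\|\tilde u\|_V^2 \le -\kappa_\ast\|\tilde u\|_H^2$. The hypothesis $M_{\sqrt Q}\mbox{Lip}_\sigma^2 \le \kappa_\ast/4$ turns \eqref{Dispersion2} into $\|\tilde\Sigma\|_{L_2(H)}^2 \le (\kappa_\ast/2)\bigl(\|\tilde u\|_H^2 + \|v^{TW}\wedge(1-v^{TW})\|_H^2\bigr)$, whose $\|\tilde u\|_H^2$-dependent part is absorbed into half of the remaining $-\kappa_\ast\|\tilde u\|_H^2$. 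Writing $\lambda := \kappa_\ast/2$ and $a := 2 M_{\sqrt Q}\mbox{Lip}_\sigma^2 \|v^{TW}\wedge(1-v^{TW})\|_H^2$, so that $a/\lambda$ is precisely the noise coefficient in the statement, I expect to arrive on $\{t<T\}$ at the mean-reverting SDE inequality
$$ d\|\tilde u(t)\|^2_H \le \bigl(-\lambda\|\tilde u(t)\|_H^2 + a\bigr)\,dt + d\tilde M_t. $$

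The remaining task is to convert this into the probability bound. I would stop at $T$, apply a standard localisation of $\tilde M$ and take expectations to obtain
$$ f(t) := E\bigl[\|\tilde u(t\wedge T)\|_H^2\bigr] \le \|\tilde u(0)\|_H^2 - \lambda\, E\!\int_0^{t\wedge T}\|\tilde u(s)\|_H^2\,ds + a\, E[t\wedge T], $$
then split the time integral at the random time $T$ via $\|\tilde u(s\wedge T)\|_H^2 = \|\tilde u(s)\|_H^2\,1_{\{s<T\}} + c_\ast^2\, 1_{\{s\ge T\}}$, use the elementary Chebyshev bound $c_\ast^2 P(s\ge T) \le f(s)$, and close a Gronwall-type loop giving the uniform-in-time estimate $f(t) \le \|\tilde u(0)\|_H^2 + a/\lambda$. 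Combining with $c_\ast^2 P(T\le t) \le f(t)$ (which holds because $\|\tilde u(t\wedge T)\|_H^2 = c_\ast^2$ on $\{T\le t\}$) and letting $t\to\infty$ yields the stated estimate.

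I expect this final step to be the main obstacle: the drift inequality is only available on $\{t<T\}$, and the positive noise constant $a$ must be balanced against the mean-reversion $-\lambda\|\tilde u\|_H^2$ while absorbing the random duration of the pre-exit window into the Gronwall loop. The natural candidate nonnegative supermartingale $e^{-\lambda(t\wedge T)}(\|\tilde u(t\wedge T)\|_H^2 + a/\lambda)$ is verified by a direct It\^o computation, but its value at $T$ carries a factor $e^{-\lambda T}$ that degrades for large $T$, so a crude Doob maximal-inequality argument does not close the loop; the expectation-based Gronwall route above, or a more refined Lyapunov-type supermartingale, is what I would ultimately need.
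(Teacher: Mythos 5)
Your setup is exactly the paper's: the It\^o formula \eqref{ItoFormula}, the drift estimate $2\langle \nu\Delta\tilde u+b\tilde G-\dot C\partial_x\tilde v^{TW},\tilde u\rangle\le -2\kappa_\ast\|\tilde u\|_V^2+2b(4+a)\|\tilde u\|_H\|\tilde u\|_V^2$ via Theorem \ref{thm0} and \eqref{Remainder1}, absorption of the cubic term on $\{t<T\}$ using $\|\tilde u\|_H\le c_\ast$, and the splitting of $\|\tilde\Sigma\|^2_{L_2(H)}$ under $M_{\sqrt Q}\mathrm{Lip}_\sigma^2\le\kappa_\ast/4$ all reproduce \eqref{EstTh2_1}, arriving at the correct drift inequality $d\|\tilde u\|_H^2\le(-\lambda\|\tilde u\|_H^2+a)\,dt+d\tilde M_t$ with $\lambda=\kappa_\ast/2$ and $a=2M_{\sqrt Q}\mathrm{Lip}_\sigma^2\|v^{TW}\wedge(1-v^{TW})\|_H^2$. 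Up to that point you match the paper step for step.

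The gap is the final conversion, and the Gronwall loop you propose does not close. Writing $f(t)=E[\|\tilde u(t\wedge T)\|_H^2]$, your splitting gives
$$
f(t)\le f(0)+\int_0^t\Bigl(-\lambda f(s)+\lambda c_\ast^2 P(T\le s)+a P(T>s)\Bigr)\,ds ,
$$
and inserting the Chebyshev bound $c_\ast^2P(T\le s)\le f(s)$ cancels exactly the dissipative term $-\lambda f(s)$, leaving only $f(t)\le f(0)+at$ --- linear growth, not the uniform-in-time bound you need. The Chebyshev inequality here points in the wrong direction: it is the conclusion you are after, not an input that can feed the loop. Your alternative candidate $e^{-\lambda(t\wedge T)}(\|\tilde u(t\wedge T)\|_H^2+a/\lambda)$ has the exponent with the wrong sign, as you yourself observe. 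The paper's resolution is to apply the time-dependent It\^o formula to the \emph{growing} weight $\varphi(t,x)=e^{\kappa_\ast t/2}x$: since $\partial_t\varphi$ contributes $+\lambda e^{\lambda s}\|\tilde u(s)\|_H^2$, which is dominated by the $-\lambda e^{\lambda s}\|\tilde u(s)\|_V^2$ coming from \eqref{EstTh2_1}, one obtains pathwise for $t<T$
$$
e^{\lambda t}\|\tilde u(t)\|_H^2\le\|\tilde u(0)\|_H^2+\frac a\lambda\bigl(e^{\lambda t}-1\bigr)+\int_0^te^{\lambda s}\,d\tilde M_s ,
$$
from which (stopping at $t\wedge T$, taking expectations, and letting $t\uparrow\infty$ with $\|\tilde u(T)\|_H=c_\ast$ on $\{T<\infty\}$) the stated bound $c_\ast^2P(T<\infty)\le\|\tilde u(0)\|_H^2+a/\lambda$ follows. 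This exponential reweighting --- the reason \eqref{ItoFormula} is stated for time-dependent $\varphi$ at all --- is the missing idea; if you want a fully airtight passage from the weighted inequality to $E[\|\tilde u(t\wedge T)\|_H^2]\le\|\tilde u(0)\|_H^2+a/\lambda$, note that $(\|\tilde u(t\wedge T)\|_H^2-a/\lambda)^+$ is dominated by a nonnegative local supermartingale (Tanaka), which gives the bound without any competition between the exponential weight and the stopping time.
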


\begin{proof} 
Similar to the proof of Theorem \ref{th1} we have the following inequality 
$$
\begin{aligned} 
& \langle \nu\Delta \tilde{u} (t) + b\tilde{G}(t, \tilde{u} (t)) 
- \dot{C} (t) \partial_x\tilde{v}^{TW} (t), \tilde{u} (t) \rangle \\
& 
\qquad \le -\kappa_\ast \|\tilde{u} (t)\|^2_V + b(4+a)\|\tilde{u}(t)\|_H \|\tilde{u}(t)\|^2_V\, . 
\end{aligned} 
$$
In particular,  
$$ 
\langle \nu\Delta \tilde{u} (t) + b\tilde{G}(t, \tilde{u} (t)) 
- \dot{C} (t) \partial_x\tilde{v}^{TW} (t), \tilde{u} (t) \rangle 
\le -\frac{\kappa_\ast}2 \|\tilde{u} (t)\|^2_V 
$$
for $t\le T$, where $T$ is as in \eqref{ExitTime}. Since also 
$$ 
\begin{aligned} 
\|\tilde{\Sigma} (\tilde{u} (t))\|_{L_2 (H)}^2 
& \le 2 M_{\sqrt{Q}} \mbox{ Lip}_{\sigma}^2 \left( \|\tilde{u}(t)\|^2_H + \|v^{TW}\wedge (1-v^{TW})\|^2_H \right) \\ 
& \le \frac{\kappa_\ast}2 \|\tilde{u}(t)\|^2_H + 2 M_{\sqrt{Q}} \mbox{ Lip}_{\sigma}^2 \|v^{TW}\wedge (1-v^{TW})\|^2_H 
\end{aligned} 
$$ 
it follows that 
\begin{equation} 
\label{EstTh2_1}
\begin{aligned} 
& 2\langle \nu\Delta \tilde{u} (t) + b\tilde{G}(t, \tilde{u} (t)) 
- \dot{C} (t) \partial_x\tilde{v}^{TW} (t), \tilde{u} (t) \rangle + \|\tilde{\Sigma} (t,\tilde{u} (t))\|^2_{L_2 (H)} \\ 
\qquad & \le -\frac{\kappa_\ast}2 \|\tilde{u} (t)\|^2_V + 2 M_{\sqrt{Q}}\mbox{ Lip}_{\sigma}^2  \|v^{TW}\wedge (1-v^{TW})\|^2_H \, . 
\end{aligned} 
\end{equation}
Applying Ito's formula \eqref{ItoFormula} to $e^{\frac {\kappa_\ast} 2 t}x$, \eqref{EstTh2_1} implies 
for $t < T$ that 
$$ 
\begin{aligned} 
e^{\frac{\kappa_\ast} 2 t}\|\tilde{u} (t)\|^2_H 
& \le \|\tilde{u}(0)\|^2_H + \frac{4M_{\sqrt{Q}}\mbox{Lip}_{\sigma}^2}{\kappa_\ast} \left( e^{\frac{\kappa_\ast} 2 t} - 1\right) \|v^{TW}\wedge (1-v^{TW})\|_H^2 \\
& \qquad + \int_0^t e^{\frac{\kappa_\ast} 2 s}\, d\tilde{M}_s\, . 
\end{aligned} 
$$ 
Taking expectations we obtain  
$$ 
E\left( \|\tilde{u} (t\wedge T )\|^2_H \right) \le  \|\tilde{u}(0)\|^2_H + \frac{4M_{\sqrt{Q}}\mbox{Lip}_{\sigma}^2}{\kappa_\ast} \|v^{TW}\wedge (1-v^{TW})\|_H^2 
$$ 
and thus in the limit $t\uparrow\infty$ 
$$ 
\begin{aligned} 
c_\ast^2 P\left( T < \infty \right) 
& =  E\left( \|\tilde{u} (T)1_{T < \infty} \|^2_H \right) \le \lim_{t\uparrow\infty} E\left( \|\tilde{u}(t\wedge T) \|^2_H \right)  \\
& \le  \|\tilde{u}(0)\|^2_H + \frac{4M_{\sqrt{Q}}\mbox{Lip}_{\sigma}^2}{\kappa_\ast} \|v^{TW}\wedge (1-v^{TW})\|_H^2 
\end{aligned} 
$$ 
which implies the assertion. 
\end{proof} 

\begin{remark} \rm 
The theorem establishes a global bound on the error between the solution $v$ of the stochastic Nagumo equation \eqref{StochNag} and the 
phase-shifted travelling wave $v^{TW} (\cdot + ct + C(t))$ on the set $T = \infty$. The probability that $T$ is infinite depends on two parameters, 
one is the initial error $\|\tilde{u} (0)\| = \|v - v^{TW}\|$ and the other component depends on the covariance operator of the noise term. 
In particular, the smaller the noise amplitude in the sense that Lip$_{\sigma}$ and/or $M_{\sqrt{Q}}$ are small, the smaller the probability for $T$ 
being finite. In this sense the stochastic process $C(t) + ct$ gives the correct speed of the wave front and we will use the associated random ordinary 
differential equation in future work to study rigorously its statistical properties. 
\end{remark}

\section{Proof of Theorem \ref{thm0} }
\label{SectionProofThm0}

\noindent 
The proof of Theorem \ref{thm0} requires a number of preliminary results. To simplify notations in the following we simply write $v$ instead of  
$v^{TW}$ in the whole section. Let $w(x) = v_x = k v(1-v)(x) = k\frac{e^{-kx}}{(1+e^{-kx})^2}$ with $k= \sqrt{\frac b{2\nu}}$. 

\begin{proposition} 
\label{propGS1} 
Let $u\in C_c^1 (\R)$ and write $u = hw$. Then 
$$ 
\langle \nu u_{xx} + bf^\prime (v) u,u\rangle 
\le - 2a\wedge (1-a) \nu\int h_x^2 w^2\, dx + 6|1-2a|\nu \langle h, w^2\rangle^2 \, . 
$$ 
\end{proposition}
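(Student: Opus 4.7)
The plan is to perform a ground-state substitution $u = hw$, use the travelling wave equation to compute $Lw$ where $L := \nu\partial_{xx}^2 + bf'(v)$, and bound the resulting cross term via Young's inequality combined with a weighted Poincar\'e inequality for $w^2\,dx$.

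Writing $u = hw$, expanding $u_x = h_x w + h w_x$, and integrating by parts the mixed term via $2\int hh_x ww_x\,dx = -\int h^2(w_x^2 + ww_{xx})\,dx$ yields
\[
\langle\nu u_{xx} + bf'(v)u, u\rangle = -\nu\int h_x^2 w^2\,dx + \int h^2 w\,Lw\,dx.
\]
Since $v^{TW}(x+ct)$ satisfies \eqref{Nag}, the profile obeys $\nu v_{xx} + bf(v) = cv_x$ with $c = \sqrt{2\nu b}(\tfrac{1}{2}-a)$. Differentiating in $x$ gives $Lw = cw_x$, and one further integration by parts using $ww_x = \tfrac{1}{2}(w^2)_x$ transforms the remainder into
\[
\int h^2 w\,Lw\,dx = c\int h^2 ww_x\,dx = -c\int hh_x w^2\,dx,
\]
so that $\langle\nu u_{xx} + bf'(v)u, u\rangle = -\nu\int h_x^2 w^2\,dx - c\int hh_x w^2\,dx$.

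The remaining step is to bound the cross term $-c\int hh_x w^2\,dx$. I would decompose $h = \bar h + g$ with $\bar h := \langle h, w^2\rangle/\int w^2\,dx$ the $w^2$-weighted mean (so $\bar h^2 = (6/k)^2\langle h, w^2\rangle^2$ via $\int w^2\,dx = k/6$) and $g$ of vanishing $w^2$-mean, splitting the cross term as $-c\bar h\int h_x w^2\,dx - c\int gg_x w^2\,dx$. The first summand is treated by Cauchy--Schwarz, $|\int h_x w^2\,dx|\le \sqrt{k/6}\sqrt{\int h_x^2 w^2\,dx}$, followed by Young's inequality, producing contributions to both $\int h_x^2 w^2\,dx$ and $\langle h, w^2\rangle^2$. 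The second summand equals $ck\int g^2 w^2(1-2v)\,dx$ after one more integration by parts, is bounded in absolute value by $|ck|\int g^2 w^2\,dx$ using $|1-2v|\le 1$, and is then controlled by the weighted Poincar\'e inequality $\int g^2 w^2\,dx \le C_P\int h_x^2 w^2\,dx$ for the zero-mean class (essentially the content of Proposition \ref{PropPoincare}).

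The main obstacle is the tight balancing of constants: the Young parameter and the weighted Poincar\'e constant must be coordinated so that the accumulated coefficient of $\int h_x^2 w^2\,dx$ equals exactly $-2(a\wedge(1-a))\nu = -(1-|1-2a|)\nu$ and that of $\langle h, w^2\rangle^2$ equals exactly $6|1-2a|\nu$. The scaling identities $k = \sqrt{b/(2\nu)}$, $ck = b(1-2a)/2$, and $|c|/|1-2a| = k\nu$ are what make the final constants collapse to their stated form, and it is here that the specific shape of the sigmoid profile enters in a nontrivial way.
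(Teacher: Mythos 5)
Your first half reproduces the paper's computation exactly: the ground-state substitution $u=hw$, the identity $Lw=cw_x$, and the resulting
$\langle \nu u_{xx}+bf'(v)u,u\rangle=-\nu\int h_x^2w^2\,dx+c\int h^2w_xw\,dx$ (your $-c\int hh_xw^2\,dx$ is the same quantity after one more integration by parts). The gap is in the second half. To reach the stated constants you must show
$\bigl|\int h^2w_xw\,dx\bigr|\le \frac1k\int h_x^2w^2\,dx+\frac6k\langle h,w^2\rangle^2$
(this is Lemma \ref{LemmaPerturbation} of the paper; multiplying by $|c|=k\nu|1-2a|$ and using $1-|1-2a|=2(a\wedge(1-a))$ gives the Proposition). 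Your route cannot deliver the coefficient $\frac1k$ in front of $\int h_x^2w^2\,dx$: after writing $h=\bar h+g$, the zero-mean piece gives $|ck|\int g^2w^2(1-2v)\,dx$, you bound $|1-2v|\le 1$ and apply the full-line Poincar\'e inequality with constant $\frac4{3k^2}$, which already produces $\frac{4|c|}{3k}\int h_x^2w^2\,dx$ --- strictly more than the entire budget $\frac{|c|}{k}\int h_x^2w^2\,dx$ available for the whole cross term, before the mean part (whose Young step contributes a further strictly positive multiple of $\int h_x^2w^2\,dx$) is even added. Since Proposition \ref{PropPoincare} states that $\frac4{3k^2}$ is optimal, no choice of Young parameter or sharper Poincar\'e constant can rescue this; the balancing you flag as ``the main obstacle'' is in fact impossible along this route. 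The loss occurs precisely at the step $|1-2v|\le1$: the weight $w_xw=k(1-2v)w^2$ vanishes at $x=0$, exactly where $w^2$ concentrates, and discarding that cancellation costs the factor $4/3$.

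The paper's Lemma \ref{LemmaPerturbation} exploits this cancellation differently: it splits $h$ into its even part $\tilde h$ and odd part $\hat h$, observes that $w_xw$ is odd so that $\int\tilde h^2w_xw\,dx=\int\hat h^2w_xw\,dx=0$ and only the cross term $2\int\tilde h\hat h\,w_xw\,dx$ survives, and then applies Young's inequality with the mixed weights $kw^2$ and $k^{-1}w_x^2$. The two resulting terms are controlled by two sharper functional inequalities: a half-line Poincar\'e inequality with constant $\frac1{k^2}$ (not $\frac4{3k^2}$), centered at the point $x_\ast$ where $6v(1-v)=1$ (Lemma \ref{LemmaHardy1}), and the Hardy-type inequality $\int\hat h^2w_x^2\,dx\le\int\hat h_x^2w^2\,dx$ with constant $1$ for the odd part vanishing at the origin (Lemma \ref{LemmaHardy2}). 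If you want to keep your mean/fluctuation decomposition you would at best obtain a weaker version of the Proposition, valid only for $a$ in a neighbourhood of $\frac12$ and with a smaller spectral-gap constant; recovering the stated bound for all $a\in(0,1)$ requires an argument that retains the factor $(1-2v)$.
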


\begin{proof} 
First note that 
$$ 
\nu u_{xx} + bf^\prime (v) u 
= \left( \nu h_{xx} + 2\nu\frac{w_x}{w} h_x  + c\frac{w_x}{w} h\right) w    
$$ 
because 
$$ 
\nu w_{xx} + bf^\prime(v) w   = cw_x \, .
$$ 
Integrating against $u\, dx = hw\, dx$ yields 
$$ 
\begin{aligned} 
\langle \nu u_{xx} + bf^\prime (v) u,u\rangle 
& = \int\left( \nu h_{xx} + 2\nu\frac{w_x}{w} h_x \right) \, h w^2\, dx  +  c\int h^2 w_x w\,dx \\ 
& =  - \nu \int h_x^2 w^2\, dx + c \int h^2 w_x w\, dx \, . 
\end{aligned} 
$$ 
We will prove in Lemma \ref{LemmaPerturbation} below that 
$$ 
\left| \int h^2 w_x w \, dx \right| 
\le \frac 1k \int h_x^2 w^2 \, dx + \frac 6k \left( \int h w^2 \, dx \right)^2  \, . 
$$ 
Using $c = \sqrt{2\nu b} (\frac 12 - a) = k (1-2a) \nu$, we conclude that 
$$
\begin{aligned} 
\langle \nu \Delta u + bf^\prime (v) u,u\rangle 
& \le - 2a\wedge (1-a)  \nu \int h_x^2 w^2\, dx  \\
& \quad + 6|1-2a| \nu \left( \int h w^2 \, dx \right)^2  \, . 
\end{aligned} 
$$
This proves the assertion. 
\end{proof}

\medskip 
\noindent 
The following Lemma has been used in the previous proof.

\begin{lemma} 
\label{LemmaPerturbation} 
Let $h\in C_b^1 (\R)$. Then 
$$ 
\left| \int h^2 w_xw\, dx \right| \le \frac 1k \int h_x^2 w^2\, dx + \frac 6k \left( \int hw^2\, dx\right)^2 \, . 
$$ 
\end{lemma}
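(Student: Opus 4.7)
My plan rests on three ingredients: integration by parts, Young's inequality, and a mean--residual decomposition tailored to the density $w^2$. First I would use $w_x w = \tfrac{1}{2}(w^2)_x$: since $h\in C_b^1(\R)$ and $w^2$ decays exponentially at $\pm\infty$, the boundary term in the integration by parts vanishes and
\[
\int h^2 w_x w\, dx \;=\; \tfrac{1}{2}\int h^2 (w^2)_x\, dx \;=\; -\int h\, h_x\, w^2\, dx,
\]
so it suffices to bound $|\int h\, h_x\, w^2\, dx|$. Applying Young's inequality $|h\,h_x| \le \tfrac{\lambda}{2} h^2 + \tfrac{1}{2\lambda} h_x^2$ with $\lambda = k/2$ (the choice dictated by the target coefficient $1/k$) yields
\[
\Bigl|\int h\, h_x\, w^2\, dx\Bigr| \;\le\; \tfrac{1}{k}\int h_x^2 w^2\, dx + \tfrac{k}{4}\int h^2 w^2\, dx.
\]

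To convert the residual $\tfrac{k}{4}\int h^2 w^2\, dx$ into a multiple of $\bigl(\int h w^2\, dx\bigr)^2$, I would decompose $h = \bar h + g$ with $\bar h := (\int h w^2\, dx)/(\int w^2\, dx)$, so that $\int g w^2\, dx = 0$. The normalising constant is explicit: substituting $dv = w\, dx$ gives
\[
\int_{\R} w^2\, dx \;=\; k\int_0^1 v(1-v)\, dv \;=\; \tfrac{k}{6},
\]
so $\bar h^2 \int w^2\, dx$ equals precisely $\tfrac{6}{k}\bigl(\int h w^2\, dx\bigr)^2$, matching the desired coefficient on the $\bigl(\int h w^2\bigr)^2$ term.

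The main obstacle is to handle the zero-mean remainder $\tfrac{k}{4}\int g^2 w^2\, dx$ without spoiling the sharp $1/k$ coefficient in front of $\int h_x^2 w^2\, dx$: a generic weighted Poincar\'e bound would contribute an unwanted term of order $k C_P$ that only a carefully chosen constant $C_P$ can absorb. I would therefore reparametrise by $v = (1+e^{-kx})^{-1}$: the measure $w^2\, dx$ becomes the Beta-like weight $kv(1-v)\, dv$ on $(0,1)$, and the original integral rewrites as
\[
\int h^2 w_x w\, dx \;=\; -k^2 \int_0^1 H(v)\, H'(v)\, v^2(1-v)^2\, dv
\]
with $H(v) := h(x(v))$. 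After the shift $s = v - \tfrac{1}{2}$, the factor $s(\tfrac{1}{4}-s^2)$ emerging from $v(1-v)(1-2v)$ is odd in $s$ while $v^2(1-v)^2$ is even, so only the odd component of $H^2$ contributes. Combining this parity cancellation with the identity $w_x = -2k(v-\tfrac{1}{2}) w$ should produce the precise constants $1/k$ and $6/k$ claimed in the statement.
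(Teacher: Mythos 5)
Your first route cannot close. After the integration by parts and Young's inequality with $\lambda = k/2$ you are left with the residual $\tfrac k4\int h^2w^2\,dx$, and the only tool available to convert the zero-mean part of $\int h^2w^2\,dx$ back into $\int h_x^2w^2\,dx$ is the weighted Poincar\'e inequality. Its constant $\tfrac 4{3k^2}$ is \emph{optimal} (Proposition \ref{PropPoincare}), so the residual contributes at least $\tfrac k4\cdot\tfrac 4{3k^2}=\tfrac 1{3k}$ on top of your $\tfrac 1k$, giving $\tfrac 4{3k}\int h_x^2w^2\,dx$; optimizing over $\lambda$ only improves this to $\tfrac 2{\sqrt 3\,k}$, still strictly above the claimed $\tfrac 1k$. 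There is also a bookkeeping slip: the mean part of the residual is $\tfrac k4\,\bar h^2\int w^2\,dx=\tfrac 32\bigl(\int hw^2\,dx\bigr)^2$, not $\tfrac 6k\bigl(\int hw^2\,dx\bigr)^2$ --- you dropped the prefactor $k/4$, and for $k>4$ your constant exceeds the target.

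Your second route --- the even/odd decomposition --- is exactly the paper's opening move: writing $h=\tilde h+\hat h$ and using that $w_xw$ is odd gives $\int h^2w_xw\,dx=2\int\tilde h\hat h\,w_xw\,dx$. But this is where the real work begins, not where it ends. After the Young step $2|\tilde h\hat h\,w_xw|\le k\tilde h^2w^2+\tfrac 1k\hat h^2w_x^2$ one still needs two sharp inequalities that your sketch does not supply: (i) a half-line Poincar\'e inequality for the even part with the \emph{improved} constant $\tfrac 1{k^2}$ rather than $\tfrac 4{3k^2}$ (Lemma \ref{LemmaHardy1}, which requires centering at the point $x_\ast$ where $6v(1-v)=1$ and a Cauchy--Schwarz with the weight $-v_x^2/v_{xx}$), and (ii) a weighted Hardy inequality $\int\hat h^2w_x^2\,dx\le\int\hat h_x^2w^2\,dx$ for the odd part, exploiting $\hat h(0)=0$ (Lemma \ref{LemmaHardy2}). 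Without these, the parity cancellation alone does not ``produce the precise constants'': applying the global Poincar\'e inequality to $\tilde h$ would again lose the factor $4/3$. So the proposal correctly identifies the starting decomposition but leaves the two substantive analytic estimates unproven, and its fallback argument provably cannot reach the stated constant.
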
 

\noindent 
The proof of the Lemma requires additional information on functional inequalities satisfied by the 
gradient form $\int h_x^2\, w^2\, dx$, which will be provided in Proposition \ref{PropPoincare} and 
in Lemma \ref{LemmaHardy2} first:

\begin{proposition} 
\label{PropPoincare}
The following inequality 
\begin{equation} 
\label{Poincare} 
\int  h^2 \, w^2 \, dx 
\le \frac 4{3k^2}  \int h_x^2\, w^2 \, dx + \frac 6k \left( \int h w^2 \, dx \right)^2    
\end{equation} 
holds for all $h\in C_b^1 (\R)$. Here, the constant $\frac 4{3k^2}$ is the best possible. 
\end{proposition}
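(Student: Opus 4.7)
The plan is to recognise the asserted inequality as a Poincaré inequality for an explicit one-dimensional probability measure, and to read off the optimal constant via the standard ground-state transformation to a Pöschl--Teller Schrödinger operator. Throughout, the rescaling $x\mapsto x/k$ reduces the problem to $k=1$, in which case $w(x)=v(1-v)(x)=\tfrac14\operatorname{sech}^2(x/2)$ and $\int w^2\,dx=\tfrac16$; the constants $\tfrac{4}{3k^2}$ and $\tfrac{6}{k}$ are recovered by undoing the rescaling at the end.

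With $k=1$, the measure $d\mu:=6w^2\,dx$ is a probability measure on $\mathbb R$, and a direct calculation shows that the inequality to be proved is equivalent to the Poincaré inequality
$$
\operatorname{Var}_\mu(h)\le\tfrac43\int h_x^2\,d\mu,\qquad h\in C_b^1(\mathbb R).
$$
The symmetric generator of the Dirichlet form on the right is $Lh=\rho^{-1}(\rho h_x)_x$ with density $\rho\propto\operatorname{sech}^4(x/2)$, and the Poincaré constant equals the inverse of its spectral gap $\lambda_1$. I intend to compute $\lambda_1$ by the ground-state transformation: setting $\phi:=h\sqrt\rho$ gives an isometry (up to a scalar) of $L^2(\mu)$ onto $L^2(\mathbb R,dx)$ that intertwines $-L$ with the Schrödinger operator $H=-\partial_x^2+V$, where $V=w_{xx}/w$. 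Explicit computation yields $V(x)=1-\tfrac32\operatorname{sech}^2(x/2)$, and the change of variables $z=x/2$ shows that $H$ is unitarily equivalent to $\tfrac14(H_{\mathrm{PT}}+4)$, where $H_{\mathrm{PT}}:=-\partial_z^2-6\operatorname{sech}^2 z$ is the reflectionless Pöschl--Teller Hamiltonian with strength parameter $U=2$.

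From the classical spectrum of $H_{\mathrm{PT}}$ --- two bound states at $E_n=-(U-n)^2\in\{-4,-1\}$ and continuous part $[0,\infty)$ --- one reads off $\operatorname{spec}(-L)=\{0,\tfrac34\}\cup[1,\infty)$. Hence $\lambda_1=\tfrac34$ and the Poincaré constant is exactly $\tfrac43$. Sharpness is witnessed by the extremal eigenfunction $\phi_1(x)=\sinh(x/2)$, obtained by dividing the second Pöschl--Teller state $\operatorname{sech} z\,\tanh z$ by $\sqrt\rho$: one checks directly $-L\phi_1=\tfrac34\phi_1$, $\int\phi_1\,d\mu=0$ by oddness, and $\int\phi_1^2\,d\mu=\tfrac43\int(\phi_1')^2\,d\mu$. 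Since $\phi_1\notin C_b^1(\mathbb R)$, optimality within the stated class follows by truncating $\phi_1$ with a smooth cutoff and invoking dominated convergence.

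The main obstacle is the lower spectral bound $\lambda_1\ge\tfrac34$: the upper bound is immediate from the test function $\phi_1$, but the lower bound requires genuine spectral input. If one prefers not to quote the Pöschl--Teller spectrum, two rounds of SUSY factorisation with superpotentials $2\tanh z$ and $\tanh z$ reduce $H_{\mathrm{PT}}+4$ stepwise to the free Laplacian $-\partial_z^2$, whose spectrum is $[0,\infty)$; this self-contained argument recovers both bound states and the essential spectrum needed above.
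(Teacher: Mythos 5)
Your argument is correct, but it takes a genuinely different route from the paper's. The paper proves the inequality by elementary weighted Hardy-type estimates: it splits the line at the origin, applies Cauchy--Schwarz with the weight $w^{-1/2}$, and exploits the exact identity $\frac{d}{dx}\bigl( w_x w^{-3/2}\bigr) = -\frac{k^2}{2} w^{-1/2}$ to get $\int (h-h(0))^2 w^2\,dx \le \frac{4}{3k^2}\int h_x^2 w^2\,dx$ on each half-line, and then converts this into the stated form via the variance decomposition for $\mu = Z^{-1}w^2\,dx$, $Z = k/6$. You instead pass through the ground-state transformation to the Schr\"odinger operator $-\partial_x^2 + w_{xx}/w$, identify it (after $z = x/2$) with a shifted P\"oschl--Teller Hamiltonian $-\partial_z^2 - 6\,\mathrm{sech}^2 z$, and read off the spectral gap $\tfrac34$ from its exactly known spectrum $\{-4,-1\}\cup[0,\infty)$ (or the SUSY factorisation chain). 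Your computations check out: the equivalence with $\operatorname{Var}_\mu(h)\le \frac43\int h_x^2\,d\mu$ is correct, the potential is $1-\frac32\,\mathrm{sech}^2(x/2)$, and your extremal function $\sinh(x/2)$ is, up to a constant, exactly the paper's $h_0 = v_{xx}v_x^{-3/2}$ (indeed $-L\sinh(x/2) = \tfrac34\sinh(x/2)$), with the same truncation argument needed since it is unbounded. What your approach buys is transparency and more information --- the full spectrum $\{0,\tfrac34\}\cup[1,\infty)$ rather than just the gap, so sharpness is immediate --- at the cost of relying on the exact solvability of the P\"oschl--Teller potential, which is special to the cubic nonlinearity; note that the paper's stated programme is precisely to \emph{avoid} such spectral computations in favour of functional inequalities that use only partial structural information about $w$ and might generalise to systems. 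One minor point you should make explicit: the spectral-gap Poincar\'e inequality is obtained on the form domain of the self-adjoint realisation, so a short density argument is needed to conclude it for all of $C_b^1(\R)$ (as opposed to, say, the closure of $C_c^\infty$); for this smooth strictly positive density this is routine but worth a sentence.
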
 

\begin{proof} 
We will first show that 
\begin{equation} 
\label{Poincare1} 
\int (h - h(0))^2 \, w^2 \, dx\le \frac{4}{3k^2} \int h_x^2 w^2 \, dx\, . 
\end{equation} 
To this end we will split up the estimate w.r.t. $x\ge 0$ (resp. $x\le 0$) and show that  
\begin{equation} 
\label{Poincare2} 
\int_0^\infty  (h - h(0))^2 \, w^2 \, dx \le \frac{4}{3k^2} \int_0^\infty h_x^2  w^2 \, dx  
\end{equation} 
and 
\begin{equation} 
\label{Poincare3} 
\int_{-\infty}^0 (h - h(0))^2 \, w^2 \, dx \le \frac{4}{3k^2} \int_{-\infty}^0 h_x^2 w^2 \, dx\, . 
\end{equation} 

\medskip 
\noindent 
Indeed note that for $x\ge 0$, using 
$$ 
\begin{aligned} 
(h(x) - h(0))^2 
& = \left( \int_0^x h_x (s) \, ds \right)^2  
\le  \int_0^x w^{-\frac 12}(s) \, ds \int_0^x h_x^2 w^{\frac 12} \, ds \\ 
& = -\frac 2{k^2} w_{x} w^{-\frac 32}(x) \int_0^x h_x^2 w^{\frac 12} \, ds  
\end{aligned} 
$$ 
since 
\begin{equation} 
\label{Poincare4} 
\begin{aligned} 
\frac d{dx} \left( w_{x} w^{-\frac 32}\right) 
& = k \frac d{dx} \left( (1-2v) w w^{-\frac 32}\right) = k \frac d{dx}\left( (1-2v) w^{-\frac 12}\right)  \\ 
& = kw^{-\frac 12} \left( - 2w - \frac k2 (1-2v)^2 \right) = - \frac{k^2}2 w^{-\frac 12}\, . 
\end{aligned} 
\end{equation} 
Integrating the last inequality against $w^2 \, dx$ we obtain that 
$$ 
\begin{aligned} 
\int_0^\infty (h-h(0))^2 w^2 \, dx 
& \le -\frac  2{k^2}  \int_0^\infty  h_x^2 (s) w^{\frac 12} (s) \int_s^\infty w_{x}(x) w^{\frac 12} (x)\, dx \, ds \\ 
& = \frac  4{3k^2} \int_0^\infty h_x^2 (s) w^2 (s) \, ds 
\end{aligned} 
$$ 
which gives \eqref{Poincare2}.  

\medskip 
\noindent 
For the proof of \eqref{Poincare3} note that $w^2(-x) = w^2(x)$, so that \eqref{Poincare2} implies \eqref{Poincare3}. Clearly, 
combining \eqref{Poincare2} and \eqref{Poincare3} implies \eqref{Poincare1}. For the final step of the proof of inequality \eqref{Poincare} 
let us consider the probability measure $\mu (dx) := Z^{-1} w^2 \, dx$, 
where 
$$ 
Z = \int w^2 \, dx = k\int v(1-v)v_x\, dx = k \int_0^1 v(1-v)\, dv = \frac k6 
$$ 
is the normalizing constant. Then 
$$ 
\begin{aligned} 
\int h^2 (x) \, \mu (dx)  & = \mbox{ Var }_{\mu} (h) + \left( \int h\, d\mu \right)^2 \\ 
& \le Z^{-1} \int (h-h(0))^2\, w^2 \, dx + \left( \int h\, d\mu \right)^2\\ 
& \le Z^{-1} \frac 4{3k^2} \int (\partial_x h)^2 \, w^2 \, dx  + \left( \int h\, d\mu \right)^2 
\end{aligned} 
$$ 
which implies the desired inequality. 

\medskip 
\noindent 
To see that the constant $\frac 4{3k^2}$ is the best possible one, consider the function 
$h_0 (x) = v_{xx} v_x^{-\frac 32}$. Clearly, $\int h_0 w^2\, dx = \int v_{xx} v_x^{\frac 12} \, dx = 0$, 
$$ 
\begin{aligned} 
\int h_0^2 w^2\, dx 
& = \int v_{xx}^2 v_x^{-1}\, dx = k^2 \int (1-2v)^2 v_x \, dx \\ 
& = k^2 \int_0^1 (1-2v)^2\, dv = \frac {k^2}{3}  
\end{aligned} 
$$   
and due to \eqref{Poincare4} $h_{0,x} = - \frac{k^2}2 v_x^{-\frac 12}$, hence 
$$ 
\int h_{0,x}^2 w^2\, dx = \frac{k^4}{4} \int v_x\, dx = \frac{k^4}{4} \int_0^1\, dv = \frac{k^4}{4}\, . 
$$ 
Combining all these equalities yields 
$$ 
\int h_0^2\, w^2\, dx  = \frac 4{3k^2} \int h_{0,x}^2 w^2\, dx  + \frac 6k \left( \int h_0 w^2\, dx\right)^2 \, . 
$$
Hence, if $\kappa$ denotes the minimal constant for which the inequality 
$$  
\int h^2\, w^2\, dx  \le \kappa \int h_x^2 w^2\, dx   
$$
holds for any $h\in C_b^1 (\R )$ with $\int h w^2\, dx = 0$, it follows by approximation of $h_0$ and its derivative in $L^2 (w^2\, dx)$ with functions in 
$C_b^1 (\R)$ that the same inequality also holds for $h_0$ which implies $\kappa\ge\frac{k^4}{3k^2}$. 
\end{proof}

\medskip 
\noindent 
The Poincar\'e inequality  proven above will be only sufficient to control the lower order term $c\int h^2 w_x w\, dx$ if the wave 
speed $c$ is sufficiently small which means that $a$ is sufficiently close to $\frac 12$. For small $a$ however, we will need an 
additional information provided by inequalities contained in the following two lemmas: 

\begin{lemma} 
\label{LemmaHardy1}
Let $h\in C_b^1 (\R_+)$. Then 
\begin{equation} 
\label{Hardy} 
\int_0^\infty  h^2 \, w^2 \, dx 
\le \frac 1{k^2}  \int_0^\infty h_x^2\, w^2 \, dx + \frac{12}k \left( \int_0^\infty  h w^2 \, dx \right)^2  \, . 
\end{equation} 
\end{lemma}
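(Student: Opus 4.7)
The plan is to rewrite the claim as a sharp weighted Poincar\'e inequality, then prove it by a ground-state (Picone-type) identity that reduces matters to a spectral assertion.

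First I would use $\int_0^\infty w^2\, dx = k/12$ to introduce the weighted mean $\bar h := (12/k)\int_0^\infty h w^2\, dx$; the decomposition
$$\int_0^\infty h^2 w^2\, dx = \int_0^\infty (h - \bar h)^2 w^2\, dx + \frac{12}{k}\Bigl(\int_0^\infty h w^2\, dx\Bigr)^2$$
reduces the lemma to the Poincar\'e-type bound $\int_0^\infty (h - \bar h)^2 w^2\, dx \le (1/k^2)\int_0^\infty h_x^2 w^2\, dx$.

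Next I would perform a ground-state transformation. A direct calculation yields $w_{xx} = (k^2 - 6kw)w$ (reflecting that $w = \partial_x v^{TW}$ lies in the kernel of the linearised wave operator). Setting $\phi := hw$, integration by parts, with vanishing boundary terms thanks to $w_x(0) = 0$ and the exponential decay of $w$ and $w_x$ at $+\infty$, produces the Picone-type identity
$$\int_0^\infty h_x^2 w^2\, dx = \int_0^\infty \phi_x^2\, dx + k^2\int_0^\infty \phi^2\, dx - 6k\int_0^\infty w\phi^2\, dx.$$
Then I would split $\phi = \gamma w + \phi^\sharp$ in $L^2([0,\infty))$ with $\gamma = (12/k)\int_0^\infty hw^2\, dx$ and $\int_0^\infty \phi^\sharp w\, dx = 0$; the algebraic identity $w_{xx} + 6kw^2 = k^2 w$ forces the cross terms to vanish, and the Poincar\'e inequality collapses to
$$\int_0^\infty (\phi^\sharp_x)^2\, dx \ge 6k\int_0^\infty w(\phi^\sharp)^2\, dx \quad \text{whenever} \quad \int_0^\infty \phi^\sharp w\, dx = 0.$$

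This last inequality is the statement that the Schr\"odinger quadratic form of $A := -\partial_x^2 - 6kw$ on $L^2([0,\infty))$, with its natural (Neumann) boundary data, is non-negative on the orthogonal complement of $w$. Rescaling $y = kx/2$ turns $A$ into $(k^2/4)\bigl(-\partial_y^2 - 6\,\mathrm{sech}^2 y\bigr)$, the P\"oschl--Teller Hamiltonian with coupling $\ell(\ell+1) = 6$. Its spectrum on $\R$ consists of exactly two bound states, at energies $-4$ (even eigenfunction $\mathrm{sech}^2 y$) and $-1$ (odd eigenfunction $\mathrm{sech}\,y\,\tanh y$), together with continuous spectrum $[0,\infty)$. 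Under Neumann BC on the half-line only the even bound state survives, corresponding to $\phi = w$ at eigenvalue $-k^2$ of $A$; the remainder lies in $[0,\infty)$, so $A$ is non-negative on $\{w\}^\perp$, completing the proof.

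The hardest part will be this spectral step. Elementary Cauchy--Schwarz arguments of the kind used in Proposition~\ref{PropPoincare} deliver only the weaker constant $4/(3k^2)$, since the $\R$-extremizer $v_{xx} v_x^{-3/2}$ is an odd function and arguments based on even reflection from $[0,\infty)$ to $\R$ inherit the same looseness. To reach the sharp $1/k^2$ one seems forced either to invoke the integrability of the P\"oschl--Teller spectrum, or to argue by Sturm oscillation using the formal eigenfunction $f_1(x) := \cosh(kx) - 2$ of $L := -\partial_x^2 - 2(w_x/w)\partial_x$ at eigenvalue $k^2$, which has exactly one zero on $(0,\infty)$ and thereby certifies $k^2$ as the bottom of the essential spectrum of $L$ on $\{1\}^\perp$.
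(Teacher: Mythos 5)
Your argument is correct, but it follows a genuinely different route from the paper's. You pass to the ground--state variable $\phi = hw$, use the identity $w_{xx}=k^{2}w-6kw^{2}$ to turn $\int_0^\infty h_x^2w^2\,dx$ into the quadratic form of $A=-\partial_x^2-6kw$ shifted by $k^2$, split off the projection onto $w$ (the cross terms do cancel: the boundary term in $\int_0^\infty w_x\phi^{\sharp}_x\,dx$ carries $w_x(0)=0$, and $Aw=-k^{2}w$ kills the rest), and reduce \eqref{Hardy} to non-negativity of the Neumann realization of $A$ on $\{w\}^{\perp}$, which you settle by identifying $A$ with the $\ell=2$ P\"oschl--Teller Hamiltonian. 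That step is classical and your even/odd reduction of the half-line Neumann problem to the even bound state $\operatorname{sech}^2$ is sound, but it is the one place where you import an external spectral fact rather than prove it; your proposed fallback via the explicit Neumann solution $f_1=\cosh(kx)-2$ of $Lf=k^2f$ (which indeed checks out) still needs oscillation theory at the bottom of the essential spectrum. The paper instead stays entirely elementary: it runs the same weighted Cauchy--Schwarz scheme as in Proposition~\ref{PropPoincare}, but anchored at the point $x_\ast$ with $6v_\ast(1-v_\ast)=1$ (the zero of $w_{xx}$ on $\R_+$) and with weight $-v_x^2/v_{xx}$, exploiting the exact antiderivatives $v_{xxx}=k^{2}(1-6v(1-v))v_x$ and $\frac{d}{dx}\bigl(\tfrac{1-6v(1-v)}{v_x}\bigr)=-v_{xx}/v_x^{2}$, and then concludes with the same variance decomposition you use. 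So your closing claim that elementary Cauchy--Schwarz arguments ``deliver only $4/(3k^2)$'' and that one is forced into spectral or oscillation theory is not accurate: the looseness in Proposition~\ref{PropPoincare} comes from the choice of base point and weight, not from the method, and the paper reaches $1/k^{2}$ with no spectral input. What your route buys in exchange is conceptual clarity --- it explains the constant $1/k^{2}$ as the bottom of the essential spectrum after removing the single Neumann bound state, and it shows the constant is sharp, which the paper does not address for this lemma.
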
 

\begin{proof} 
Let $v_\ast = \frac 12 + \frac 1{2\sqrt{3}}$ be the unique solution $v_\ast \in (\frac 12 , 1)$ of $6 v_\ast (1-v_{\ast}) = 1$ and 
let $x_\ast := v^{-1} (v_\ast) > 0$. We will now first show that 
\begin{equation} 
\label{Hardy1} 
\int_{x_\ast}^\infty  (h - h(x_\ast))^2 \, w^2 \, dx\le \frac 1{k^2} \int_{x_\ast}^\infty h_x^2 w^2 \, dx  
\end{equation} 
and 
\begin{equation} 
\label{Hardy2} 
\int_0^{x_\ast}  (h(x_\ast ) - h)^2 \, w^2 \, dx \le \frac 1{k^2} \int_0^{x_\ast} h_x^2 w^2 \, dx  \, . 
\end{equation}

\medskip 
\noindent 
For the proof of both inequalities note that 
$$ 
v_{xxx} = k^2 (1-6v(1-v))v_x
$$ 
and 
$$ 
\frac{d}{dx} \left( \frac{1-6v(1-v)}{v_x}\right) = -\frac{v_{xx}}{v^2_x}\, . 
$$ 
It follows for $x\ge x_\ast$ that 
$$ 
\begin{aligned} 
\left( h(x) - h(x_{\ast}) \right)^2 
&  = \left( \int_{x_\ast}^x h_x(s)\, ds \right)^2 \\ 
& \le \int_{x_\ast}^x h_x^2 (s)\left( - \frac{v_{x}^2}{v_{xx}}\right) (s) \, ds 
       \int_{x_\ast}^x - \frac{v_{xx}}{v_x^2} (s)\, ds \\
& = \int_{x_\ast}^x h_x^2 (s) \left( - \frac{v_x^2}{v_{xx}}\right) (s)\, ds \cdot \frac{1-6v(1-v)(x)}{v_x (x)} \, . 
\end{aligned} 
$$ 
Integrating the last inequality against $w^2\, dx$ yields 
$$ 
\begin{aligned} 
\int_{x_\ast}^\infty & \left( h(x) - h(x_\ast )\right)^2 \, w^2 (x) \, dx \\
& \le \int_{x_\ast}^\infty h_x^2 (s) \left( -\frac{v_x^2}{v_{xx}}(s)\right) \int_s^\infty \left( 1-6v(1-v)(x)\right)\, v_x (x)\, dx \, ds \\ 
& = \frac 1{k^2} \int_{x_\ast}^\infty h_x^2 (s) \left( -\frac{v_x^2}{v_{xx}}\right) (s) \left( -v_{xx} (s)\right) \, ds   
  = \frac 1{k^2} \int_{x_\ast}^\infty h_x^2 w^2\, ds \, . 
\end{aligned} 
$$ 
Similarly, for $x\le x_\ast$ 
$$ 
\left( h(x_\ast ) - h(x) \right)^2 
\le \int_x^{x_\ast} h_x^2 (s) \left( - \frac{v_x^2}{v_{xx}}\right) (s)\, ds \cdot \frac{6v(1-v)(x)-1}{v_x (x)}  
$$ 
and integrating the last inequality against $w^2\, dx$ again yields 
$$ 
\begin{aligned} 
\int_0^{x_\ast} & \left( h(x_\ast ) - h(x)\right)^2 \, w^2 (x) \, dx  \\ 
& \le \int_0^{x_\ast} h_x^2 (s) \left( -\frac{v_x^2}{v_{xx}}\right)(s) \int_0^s \left( 6v(1-v)(x)-1\right)\, v_x (x)\, dx \, ds \\ 
& = \frac 1{k^2} \int_0^{x_\ast} h_x^2 w^2\, ds \, . 
\end{aligned} 
$$ 

\medskip 
\noindent 
Combining \eqref{Hardy1} and \eqref{Hardy2} we obtain the inequality 
\begin{equation} 
\label{Hardy3} 
\int_0^\infty \left( h(x) - h(x_\ast )\right)^2 \, w^2 (x) \, dx \le \frac 1{k^2} \int_0^\infty h_x^2\, w^2\, dx \, . 
\end{equation} 
 
\medskip 
\noindent 
For the final step let us consider the probability measure $\mu (dx) := Z^{-1} w^2 \, dx$ on $\R_+$, where 
$$ 
Z = \int_0^\infty w^2\, dx = k \int_0^\infty v(1-v) v_x\, dx = k\int_{\frac 12}^1 v(1-v)\, dv =  \frac{k}{12} 
$$ 
is a normalizing constant. Then \eqref{Hardy3} implies 
$$ 
\begin{aligned} 
\int_0^\infty h^2 (x) \, \mu (dx)  & = \mbox{ Var }_{\mu} (h) + \left( \int_0^\infty h\, d\mu \right)^2 \\ 
& \le Z^{-1} \int_0^\infty (h-h(x_\ast ))^2\, w^2 \, dx + \left( \int_0^\infty h\, d\mu \right)^2 \\ 
& \le Z^{-1} \frac 1{k^2}  \int_0^\infty (\partial_x h)^2 \, w^2 \, dx  + \left( \int_0^\infty h\, d\mu \right)^2 
\end{aligned} 
$$ 
which implies the assertion 
$$ 
\int_0^\infty h^2 w^2\, dx \le \frac 1{k^2} \int_0^\infty h_x^2\, w^2\, dx + \frac {12}{k} \left( \int_0^\infty hw^2\, dx\right)^2 \, . 
$$ 
\end{proof}

\begin{lemma} 
\label{LemmaHardy2}
Let $h\in C_b^1 (\R_+ )$ be such that $h(0)=0$. Then 
\begin{equation} 
\label{Hardy2_1} 
\int_0^\infty  h^2 \, w_x^2 \, dx \le \int_0^\infty h_x^2\, w^2 \, dx  \, . 
\end{equation} 
\end{lemma}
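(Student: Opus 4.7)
The plan is to reduce the inequality, via an integration-by-parts identity, to a one-dimensional Hardy-type inequality on the half-line that can then be settled by the classical ground state method.

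First I would set $g := hw$, which inherits $g(0) = 0$ and decays exponentially at $+\infty$. Expanding $\int_0^\infty g_x^2\, dx = \int_0^\infty (h_x w + h w_x)^2\, dx$ and integrating the cross term by parts (boundary terms vanish thanks to $h(0)=0$ and the exponential decay of $w, w_x$ at infinity) gives
\begin{equation*}
\int_0^\infty g_x^2\, dx = \int_0^\infty h_x^2 w^2\, dx - \int_0^\infty h^2 w w_{xx}\, dx\, .
\end{equation*}
Combined with the pointwise identity $w_x^2 - w w_{xx} = 2k w^3$, which is easily verified from $w = kv(1-v)$, $w_x = k(1-2v)w$ and $w_{xx} = k^2(1-6v(1-v))w$, this rewrites the target estimate as the one-dimensional Hardy inequality
\begin{equation*}
2k \int_0^\infty w g^2\, dx \le \int_0^\infty g_x^2\, dx\, , \qquad g(0) = 0\, .
\end{equation*}

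Next I would identify the ground state of $-\partial_x^2 - 2kw$ on $\R_+$ with Dirichlet boundary condition at $0$. Recognising that $w(x) = \frac{k}{4}\operatorname{sech}^2(kx/2)$ is a P\"oschl--Teller-type weight, the natural candidate is
\begin{equation*}
\phi_0(x) := 2v(x) - 1 = \tanh(kx/2)\, ,
\end{equation*}
i.e.\ the odd part of the travelling wave about its inflection point. Since $\phi_0' = 2w$, one has $\phi_0'' = 2w_x = -2kw\phi_0$ on all of $\R$, so the zero-energy equation $-\phi_0'' = 2kw\phi_0$ holds; on the half-line $\phi_0 > 0$ with $\phi_0(0)=0$ and $\phi_0(\infty)=1$. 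Substituting $g = \phi_0 \eta$ (the quotient extends continuously across $0$ because $g$ and $\phi_0$ both vanish linearly there) and integrating the cross term $\int \phi_0 \phi_0' (\eta^2)_x\, dx$ by parts -- boundary contributions vanish at $0$ through $\phi_0(0)=0$ and at $\infty$ through the exponential decay of $\phi_0'$ and $\eta$ -- I obtain
\begin{equation*}
\int_0^\infty g_x^2\, dx = \int_0^\infty \phi_0^2 \eta_x^2\, dx - \int_0^\infty \phi_0 \phi_0'' \eta^2\, dx = \int_0^\infty \phi_0^2 \eta_x^2\, dx + 2k \int_0^\infty w g^2\, dx\, .
\end{equation*}
Dropping the nonnegative remainder $\int \phi_0^2 \eta_x^2\, dx$ yields the Hardy inequality, and hence the lemma.

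The main obstacle is the second step: exhibiting the explicit positive half-line solution of $-\phi_0'' = 2kw\phi_0$. The fortunate identification $\phi_0 = 2v-1$ is what makes the ground state argument clean and delivers the constant $1$ directly; lacking such a closed-form ground state, one would have to resort to a Muckenhoupt-type computation, which would be considerably less transparent. Everything else is routine integration by parts together with careful boundary-term bookkeeping justified by the exponential decay of $w$ at infinity.
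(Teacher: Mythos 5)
Your argument is correct, but it takes a genuinely different route from the paper's. The paper proves \eqref{Hardy2_1} directly by a weighted Cauchy--Schwarz argument of Muckenhoupt type: it estimates $h(x)^2=\bigl(\int_0^x h_x\,ds\bigr)^2$ against the auxiliary weight $k^2(1-2v(1-v))/w$, whose primitive is $-w_x/w^2$ (vanishing at $0$ because $v(0)=\tfrac12$), and then closes the bound by Fubini together with the explicit computation $-\int_s^\infty w_x^3/w^{2}\,dx=k^3v(1-v)(1-2v(1-v))(s)$ --- ironically, exactly the ``less transparent Muckenhoupt-type computation'' you hoped to avoid. Your route instead uses the identity $w_x^2-ww_{xx}=2kw^3$ to recast the lemma as the nonnegativity of the Dirichlet Schr\"odinger operator $-\partial_x^2-2kw$ on $\R_+$, and then performs the ground-state substitution $g=\phi_0\eta$ with the explicit zero mode $\phi_0=2v-1$; the identities $\phi_0'=2w$ and $-\phi_0''=2kw\phi_0$ check out, and your bookkeeping of the boundary terms at $0$ (where $g^2/\phi_0=O(x)$) and at $\infty$ is sound. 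What your approach buys: it produces the exact deficit $\int_0^\infty\phi_0^2\eta_x^2\,dx$, showing the constant $1$ is sharp but unattained (equality would force $h=c(2v-1)/w$, which is unbounded), and it explains conceptually why the inequality holds, namely that $2v-1$ is the positive zero-energy solution with Dirichlet data at the inflection point of the wave. The paper's computation is more elementary (no quotient $g/\phi_0$ to control near $x=0$) and runs on the same template as the companion Lemma \ref{LemmaHardy1}, where the relevant weight is again found by inspection; your version is the more structural one and would be the natural starting point if one wanted to perturb the nonlinearity and track how the ground state, and hence the constant, moves.
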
 

\begin{proof} 
For the proof of the inequality note that 
$$  
\frac{d}{dx} \left( -\frac{v_{xx}}{v^2_x} \right) = k \frac{1-2v(1-v)}{v(1-v)} = k^2 \frac{1-2v(1-v)}{w}\, . 
$$ 
It follows for $x\ge 0$ that 
$$ 
\begin{aligned} 
h(x)^2 
&  = \left( \int_0^x h_x \, ds \right)^2 \\ 
& \le \int_0^x h_x^2 \frac{w}{k^2 (1-2v(1-v))} \, ds \int_0^x \frac{k(1-2v(1-v))}{v(1-v)}\, ds \\ 
& = \int_0^x h_x^2 \frac{w}{k^2 (1-2v(1-v))} \, ds \left( -\frac{w_x}{w^2} (x) \right) \, . 
\end{aligned} 
$$ 
Integrating the last inequality against $w^2\, dx$ yields 
$$ 
\begin{aligned} 
\int_0^\infty h^2 \, w^2 \, dx & 
  \le \int_0^\infty h_x^2 \frac{w}{k^2 (1-2v(1-v))} \int_s^\infty \left( -\frac{w_x}{w^2}\right) w_x^2 \, dx\, ds \\ 
& =  \int_0^\infty h_x^2 w^2 \, ds 
\end{aligned} 
$$ 
using 
$$ 
\begin{aligned} 
- \int_s^\infty \frac{w_x}{w^2} w^2_x \, dx 
& - \int_s^\infty \frac{w_x^3}{w^2}\, dx = k^3 \int_s^\infty (1-2v)^3 w\, dx \\ 
& = k^3 \int_{v(s)}^1 (1-2v)^3\, dv = \frac{k^3}{8} \left( 1- (1-2v(s))^4 \right) \\ 
& = k^3 v(1-v)(1-2v(1-v))\, . 
\end{aligned} 
$$ 
\end{proof} 

\medskip 
\noindent 
Due to symmetry the previous lemma also implies that  
$$ 
\int_{-\infty}^0  h^2 \, w_x^2 \, dx \le \int_{-\infty}^0 h_x^2\, w^2 \, dx 
$$  
for $h\in C_b^1 (\R_- )$ with $h(0) = 0$, hence 
\begin{equation} 
\label{Hardy2_2} 
\int_{-\infty}^\infty  \left( h - h(0)\right)^2 \, w_x^2 \, dx \le \int_{-\infty}^\infty h_x^2\, w^2 \, dx   
\end{equation} 
for $h\in C_b^1 (\R )$. 

\medskip 
\noindent 
We can now turn back to the proof of Lemma \ref{LemmaPerturbation}. 

\begin{proof} (of Lemma \ref{LemmaPerturbation}) 
Let us denote with $\tilde{h} (x) = \frac 12\left( h(x) + h(-x)\right)$ (resp. $\hat{h} (x) =\frac 12 \left( h(x) - h(-x)\right)$) 
the even (resp. odd) part of $h$. Then 
\begin{equation} 
\label{pert0} 
\begin{aligned} 
\left|\int_\R h^2 w_x w\, dx \right| & =  \left|\int_\R (\tilde{h} + \hat{h})^2 w_x w \, dx\right| = 2 \left|\int_\R\tilde{h} \hat{h} w_x w\, dx \right|  \\ 
& \le k\int_\R \tilde{h}^2 w^2\, dx + \frac 1k \int_\R \hat{h}^2 w_x^2\, dx 
\end{aligned} 
\end{equation} 
using $\int_\R \tilde{h}^2 w_x w\, dx = \int_\R \hat{h}^2 w_x w\, dx = 0$. The previous Lemma \ref{LemmaHardy2} and 
$\int_\R \tilde{h}w^2\, dx = \frac 12 \int_\R hw^2\, dx$ imply that 
\begin{equation} 
\label{pert1} 
\begin{aligned} 
\int\tilde{h}^2 w^2\, dx 
& = 2 \int_0^\infty \tilde{h}^2 w^2\, dx \le \frac 2{k^2} \int_0^\infty \tilde{h} w^2\, dx 
     + \frac{24}{k} \left( \int_0^\infty \tilde{h} w^2\, dx \right)^2 \\ 
& = \frac 1{k^2} \int_\R  \tilde{h}_x^2 w^2\, dx + \frac 6k \left( \int_\R hw^2\, dx\right)^2 \, . 
\end{aligned} 
\end{equation} 
Similarly, equation \eqref{Hardy2_2} and $\hat{h}(0) = 0$ imply that 
\begin{equation} 
\label{pert2} 
\int_\R \hat{h}^2 w_x^2\, dx \le \int_\R \hat{h}_x^2 w^2\, dx\, . 
\end{equation} 
Inserting \eqref{pert1} and \eqref{pert2} into \eqref{pert0} yields 
$$ 
\begin{aligned} 
\left| \int_\R h^2 w_x w \, dx \right| 
& \le \frac 1k \int_\R \left( \tilde{h}^2_x + \hat{h}^2_x\right)\, w^2\,dx + \frac 6k \left( \int_\R hw^2\, dx\right)^2 \\ 
& = \frac 1k \int_\R h_x^2 w^2\, dx + \frac 6k \left( \int_\R hw^2\, dx\right)^2 \, . 
\end{aligned} 
$$ 
\end{proof}

\begin{lemma} 
\label{LemmaNormEquivalence} 
Let $u \in C_c^1 (\R)$ and write $u = hw$. Then 
$$ 
\int u_x^2 \, dx + \int u^2\, dx 
\le q_1 \int h_x^2 w^2\, dx + q_2 \langle u, v_x\rangle^2  
$$
with 
\begin{equation} 
\label{Constants} 
q_1 := 5 \left( 1 + \frac {\nu}b \right)   
\qquad\mbox{ and }\qquad  
q_2 :=  18 \sqrt{\frac{2}{\nu b}} (\nu + b)\, . 
\end{equation} 
\end{lemma}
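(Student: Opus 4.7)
\medskip
\noindent
My plan is to reduce both $\int u^2\,dx$ and $\int u_x^2\,dx$ to weighted integrals of $h$, where $u = hw$, and then invoke the sharp Poincar\'e inequality from Proposition \ref{PropPoincare}. Since $w = v_x$, the pairing simplifies to $\langle u, v_x\rangle = \int hw^2\,dx$, and Proposition \ref{PropPoincare} applied directly yields
$$\int u^2\,dx = \int h^2 w^2\,dx \le \tfrac{4}{3k^2}\int h_x^2 w^2\,dx + \tfrac{6}{k}\langle u,v_x\rangle^2.$$
The main task is an analogous bound for the gradient term $\int u_x^2\,dx$, where a naive expansion of $u_x = h_x w + h w_x$ would produce an awkward cross term.

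\medskip
\noindent
For the gradient term I would integrate by parts. Since $u\in C_c^1(\R)$ and $w>0$ is smooth, $h = u/w$ is again in $C_c^1(\R)$, so
$$2\int h_x h\, w_x w\,dx = \int (h^2)_x (ww_x)\,dx = -\int h^2 w_x^2\,dx - \int h^2 w w_{xx}\,dx.$$
After cancellation with $\int h^2 w_x^2\,dx$ arising from the square expansion, one obtains the clean identity
$$\int u_x^2\,dx = \int h_x^2 w^2\,dx - \int h^2 w w_{xx}\,dx.$$
To bound the remainder, I compute $w_x = k(1-2v)w$ and $w_{xx} = -2kw^2 + k^2(1-2v)^2 w$, using $v_x = w$ and $w = kv(1-v)$, whence
$$-w w_{xx} = 2kw^3 - k^2(1-2v)^2 w^2 \le 2kw^3 \le \tfrac{k^2}{2}w^2$$
by the pointwise bound $w \le k/4$. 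Inserting this and applying Proposition \ref{PropPoincare} once more gives
$$\int u_x^2\,dx \le \tfrac{5}{3}\int h_x^2 w^2\,dx + 3k\,\langle u,v_x\rangle^2.$$

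\medskip
\noindent
Summing the two bounds and substituting $1/k^2 = 2\nu/b$ yields
$$\|u\|_V^2 \le \Bigl(\tfrac{5}{3} + \tfrac{8\nu}{3b}\Bigr)\int h_x^2 w^2\,dx + \Bigl(3k + \tfrac{6}{k}\Bigr)\langle u,v_x\rangle^2.$$
A routine check shows $\tfrac{5}{3} + \tfrac{8\nu}{3b} \le 5(1+\nu/b) = q_1$ (since $5/3,\,8/3 \le 5$), and with $k=\sqrt{b/(2\nu)}$ one has $3k + \tfrac{6}{k} = \tfrac{3\sqrt{2}}{2}\sqrt{b/\nu} + 6\sqrt{2}\sqrt{\nu/b} \le 18\sqrt{2}\bigl(\sqrt{b/\nu}+\sqrt{\nu/b}\bigr) = 18\sqrt{2/(\nu b)}(\nu+b) = q_2$. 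The step I expect to require the most care is the integration-by-parts simplification of $\int u_x^2\,dx$; without collapsing the cross term against $\int h^2 w_x^2\,dx$ one is forced into a crude Young estimate, and only the explicit pointwise identity for $w_{xx}$ together with $w \le k/4$ lets the remainder be absorbed by Poincar\'e with the stated constants.
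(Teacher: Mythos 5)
Your proof is correct and follows the paper's argument in all essentials: the same integration-by-parts identity $\int u_x^2\,dx = \int h_x^2 w^2\,dx - \int h^2 w w_{xx}\,dx$, followed by absorbing the remainder into $\int h^2w^2\,dx$ and invoking Proposition \ref{PropPoincare}. The only (harmless) deviation is that you bound $-ww_{xx}$ via the explicit formula $w_{xx}=-2kw^2+k^2(1-2v)^2w$ and $w\le k/4$, whereas the paper uses the travelling-wave ODE $\nu w_{xx}+bf'(v)w=cw_x$; your pointwise bound is in fact slightly sharper and still lands within the stated constants $q_1$, $q_2$.
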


\begin{proof} 
Clearly, 
$$ 
\begin{aligned} 
\nu \int u_x^2\, dx 
& = \nu \int h_x^2 w^2 \, dx + 2\nu\int h_x h w_x w \, dx + \nu \int w_x^2 h^2 \, dx \\
& = \nu \int h_x^2 w^2 \, dx - \nu\int h^2 w_{xx} w\, dx \, . 
\end{aligned} 
$$ 
Using the fact that 
$$ 
- \nu w_{xx}  = bf^\prime (v) w - cw_x \le \left( b\eta + ck\right) w  
$$ 
and the obvious estimate $ck = b \left( \frac 12 -a\right) \le b$, we obtain that 
$$ 
\nu \int u_x^2\, dx \le \nu \int h_x^2 w^2\, dx + b(\eta + 1) \int h^2 w^2\, dx \, . 
$$ 
Using the Poincar\'e inequality \eqref{Poincare} again, as well as $q := \nu + b(\eta + 1)$  we arrive at 
$$ 
\nu \int u^2 + u_x^2 \, dx \le \left( \nu  + q \frac{4}{3k^2} \right) \int h_x^2 w^2\, dx 
+ q\frac{12}{k}\langle u, w\rangle^2 
$$ 
which implies the desired inequality, using the inequalities 
$$ 
\nu + q\frac{4}{3k^2} = \nu + \left( \nu + b(\eta + 1) \right) \frac 83 \frac \nu b \le \nu 5 \left( 1 + \frac {\nu}b \right) 
$$ 
and 
$$ 
q\frac {12}k = (\nu + b(\eta + 1)) 12 \sqrt{\frac{2\nu}b} \le \nu 18 \sqrt{\frac 2{\nu b}} (\nu + b) \, . 
$$ 
\end{proof} 

\medskip 
\noindent 
\begin{proof} (of Theorem \ref{thm0})
First let $u\in C_c^1 (\R)$. Then Proposition \ref{propGS1} implies the estimate 
$$ 
\begin{aligned} 
\langle \nu \Delta u + bf^\prime (v)u,u\rangle  
& \le - 2a\wedge (1-a)  \nu \int h_x^2  w^2 \, dx + 6|1-2a|\nu \langle u, v_x\rangle^2 \, . 
\end{aligned} 
$$ 
Combining the last estimate with the previous Lemma \ref{LemmaNormEquivalence}, we obtain that 
$$ 
\begin{aligned} 
\qquad & \langle \nu \Delta u + bf^\prime (v)u,u\rangle  \le - \frac{2a\wedge (1-a)\nu}{q_1} \|u\|^2_V \\
& \qquad + \left( 6|1-2a|\nu + 2a\wedge (1-a) \nu \frac{q_2}{q_1}\right) \langle u, v_x\rangle^2  \\ 
& \le - \frac 25 \frac{\nu b}{\nu + b} a\wedge (1-a) \|u\|_V^2 \\ 
& \qquad + \left( 6|1-2a| \nu+ 6 a\wedge (1-a) \sqrt{2b\nu} \right) \langle u, v_x\rangle^2 \\ 
& \le - \frac 25 \frac{\nu b}{\nu + b} a\wedge (1-a) \|u\|_V^2 + 6(\nu + b) \langle u, v_x\rangle^2 \, . 
\end{aligned} 
$$ 
This implies Theorem \ref{thm0} with 
$$ 
\kappa_\ast = \frac 25 \frac{\nu b}{\nu + b}a 
$$ 
and 
$$ 
C_\ast = 6 (\nu + b)\, .   
$$ 
For general $u\in V$ we deduce the desired estimate by standard approximation of $u$ with $C_c^1$-functions in the $V$-norm. 
\end{proof}

\bigskip 
\noindent 
{\bf Acknowlegdement} This work is supported by the BMBF, FKZ 01GQ1001B.

\end{document}